
\documentclass[12pt]{amsart}

\usepackage{amsmath}
\usepackage{amsthm}
\usepackage{amssymb}
\usepackage{hyperref}
\usepackage[margin=1.0in]{geometry}

\numberwithin{equation}{section}

\newtheorem{prop}{Proposition}
\newtheorem{lemma}[prop]{Lemma}

\newtheorem{thm}[prop]{Theorem}
\newtheorem{cor}[prop]{Corollary}

\numberwithin{prop}{section}

\theoremstyle{definition}
\newtheorem{defn}[prop]{Definition}

\newtheorem{rmk}[prop]{Remark}

\newcommand{\del}{\partial}
\newcommand{\delb}{\bar{\partial}}
\newcommand{\brs}[1]{\left| #1 \right|}

\newcommand{\gG}{\Gamma}

\newcommand{\gD}{\Delta}

\newcommand{\gw}{\omega}
\newcommand{\ga}{\alpha}
\newcommand{\gb}{\beta}

\newcommand{\N}{\nabla}

\newcommand{\til}[1]{\widetilde{#1}}

\renewcommand{\bar}[1]{\overline{#1}}

\renewcommand{\i}{\sqrt{-1}}
\newcommand{\C}{\mathbb C}
\newcommand{\R}{\mathbb R}

\newcommand{\pd}{\partial}
\renewcommand{\kappa}{\varkappa}

\newcommand{\bz}{\bar{z}}

\newcommand{\bgb}{\bar{\beta}}

\newcommand{\HH}{\mathcal{H}}

\DeclareMathOperator{\Rc}{Rc}

\DeclareMathOperator{\tr}{tr}

\begin{document}

\title[Generalized K\"ahler-Ricci solitons on surfaces]{Classification of generalized K\"ahler-Ricci solitons on complex surfaces}

\begin{abstract} Using toric geometry we give an explicit construction of the compact steady solitons for pluriclosed flow first constructed in \cite{Streetssolitons}.  This construction also reveals that these solitons are generalized K\"ahler in two distinct ways, with vanishing and nonvanishing Poisson structure.  This gives the first examples of generalized K\"ahler structures with nonvanishing Poisson structure on non-standard Hopf surfaces, completing the existence question for such structures.  Moreover this gives a complete answer to the existence question for generalized K\"ahler-Ricci solitons on compact complex surfaces.  In the setting of generalized K\"ahler geometry with vanishing Poisson structure, we show that these solitons are unique.  We show that these solitons are global attractors for the generalized K\"ahler-Ricci flow among metrics with maximal symmetry.
\end{abstract}

\author{Jeffrey Streets}
\address{Rowland Hall\\
         University of California, Irvine}
\email{\href{mailto:jstreets@uci.edu}{jstreets@uci.edu}}

\author{Yury Ustinovskiy}
\address{Courant Institute of Mathematical Sciences, New York}
\email{\href{mailto:yu3@nyu.edu}{yu3@nyu.edu}}

\date{\today}

\maketitle

\section{Introduction}

The pluriclosed flow is an extension of K\"ahler-Ricci flow to the setting of complex, non-K\"ahler geometry \cite{PCF}.  Crucial to understanding the singularity formation is to first understand the self-similar solutions, or solitons.  For a pluriclosed structure $(g, J)$, the steady soliton equations can be expressed as
\begin{gather} \label{f:solitonricci}
\begin{split}
\Rc - \tfrac{1}{4} H^2 + \N^2 f = 0,\\
\tfrac{1}{2} d^* H + i_{\frac{1}{2} \N f} H = 0,
\end{split}
\end{gather}
where $H = d^c \gw$ is a closed three-form, $H^2_{ij} = H_{ipq} H_j^{pq}$, and $f$ is a smooth function.  A surprising new feature of the pluriclosed flow is the existence of nontrivial compact steady solitons, constructed by the first author \cite{Streetssolitons}, specifically on all diagonal Hopf surfaces.  In this paper we give a new construction of these solitons from the point of view of toric geometry.  While the construction in \cite{Streetssolitons} reveals the Kaluza-Klein structure of these solitons, our construction here from the toric geometry point of view is much more explicit, and as such makes evident the connection to generalized K\"ahler geometry, which we will detail below.

\begin{thm} \label{t:mainthm1} On every diagonal Hopf surface $(M^4, I_{\ga\gb})$ there exists a pluriclosed steady soliton metric $g_{\ga\gb}$ which is invariant under the maximal torus of biholomorphisms, and is unique up to the $1$-parameter family of automorphisms generated by the associated soliton vector field.  Furthermore, it is
\begin{enumerate}
\item part of an odd generalized K\"ahler triple $(g_{\ga\gb}, I_{\ga\gb}, J)$, where $J$ is biholomorphic to $I_{\ga \bar{\gb}}$
\item part of an even generalized K\"ahler triple $(g_{\ga\gb}, I_{\ga\gb}, J)$, where $J$ is biholomorphic to~$I_{\ga \bar{\gb}}$.
\item A global attractor for the odd-type generalized K\"ahler-Ricci flow starting with invariant initial data.
\end{enumerate}
\end{thm}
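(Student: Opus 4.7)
The plan is to exploit the maximal torus $T^2$ of biholomorphisms of $(M^4, I_{\ga\gb})$, acting diagonally on $\C^2\setminus\{0\}$, to reduce (\ref{f:solitonricci}) to an ODE. In logarithmic coordinates $z_k = e^{s_k+\i\theta_k}$ the $T^2$-quotient is a strip, and any $T^2$-invariant pluriclosed $(1,1)$-form may be written in terms of functions of $(s_1,s_2)$; further reducing by the $\R$-action generated by the soliton vector field cuts this down to functions of a single variable $\tau := s_1 - \mu s_2$, with $\mu$ depending on the eigenvalues $(\ga,\gb)$. First I would fix the ansatz for $\gw$ in these coordinates, compute $\Rc$, $H^2$, $\N^2 f$ and $d^*H$ as functions of $\tau$, and substitute into (\ref{f:solitonricci}). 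This produces a second-order nonlinear ODE for a single profile function, with boundary behaviour at $\tau\to\pm\infty$ dictated by smooth extension of $\gw$ across the two special $T^2$-orbits, namely the images of the coordinate axes in $\C^2$. The second soliton equation $\tfrac{1}{2}d^*H + i_{\tfrac{1}{2}\N f}H = 0$ should supply a first integral that drops the order, and the remaining first-order equation ought to integrate in closed form; matching both boundary conditions then selects a unique solution, giving existence and uniqueness within the invariant class. Uniqueness modulo the one-parameter group generated by $\N f$ is the standard ambiguity in the soliton potential.

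For parts (1) and (2), the toric description makes the partner $J$ explicit. In the frame $(\del_{s_k}, \del_{\theta_k})$ one has two natural almost complex structures $J_\pm$ obtained from $I_{\ga\gb}$ by reversing sign on the $(s_2,\theta_2)$-plane, with respectively opposite and the same orientation on the angle coordinate. Both are $g_{\ga\gb}$-orthogonal; integrability of each follows from the functional form of the ansatz, and the generalized K\"ahler identity $d d^c_I \gw = - d d^c_{J_\pm} \gw$ reduces to a scalar identity that one checks directly from the profile ODE. The Poisson tensor $\sigma := \tfrac{1}{2}[I_{\ga\gb}, J_\pm] g^{-1}$ vanishes in one case and is nonvanishing in the other, distinguishing odd from even type. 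Identifying $J_\pm$ as $I_{\ga\bar{\gb}}$ up to biholomorphism amounts to noting that conjugating the $z_2$-coordinate on the defining covering replaces the eigenvalue $\gb$ by $\bar{\gb}$.

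For (3), I would restrict the odd-type generalized K\"ahler-Ricci flow to the class of $T^2$-invariant pluriclosed structures, converting it into a one-dimensional parabolic equation for the profile function of $\gw_t$ in the variable $\tau$. The generalized Perelman-type functional associated with the steady soliton system provides monotonicity; by the uniqueness proved above its only critical point among $T^2$-invariant structures is $g_{\ga\gb}$. Combined with standard parabolic estimates and the maximum principle applied to the reduced equation, this yields long-time existence and convergence to $g_{\ga\gb}$ modulo the soliton diffeomorphisms.

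The hardest part is likely to be twofold. First, verifying smoothness of the ansatz across the two special $T^2$-orbits with nontrivial isotropy requires delicate boundary asymptotics for the profile function, which is precisely where the explicit closed-form solution from step one is essential. Second, upgrading monotonicity of the Perelman-type functional to genuine convergence in (3) requires ruling out collapsing or blowup as $\tau \to \pm\infty$ and producing uniform control of the profile function along the reduced flow; this stability analysis is feasible only because the limiting soliton is known explicitly.
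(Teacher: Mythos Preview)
Your overall strategy---toric reduction to a single-variable ODE, explicit integration, boundary matching for smooth extension across the two elliptic curves---matches the paper's approach, and your treatment of the odd-type structure (conjugating the $z_2$-factor) is correct. However, your construction of the even-type $J$ in part~(2) has a genuine gap. You propose $J_\pm$ as complex structures obtained from $I_{\ga\gb}$ by simple sign flips in the $(s_2,\theta_2)$-plane, both \emph{independent of the metric}. While this works for the odd (commuting) case, no such metric-independent coordinate flip produces a same-orientation integrable $J$ forming a generalized K\"ahler pair with $g_{\ga\gb}$. In the paper the even-type $J$ is specified on the dense torus by declaring
\[
\Omega_+ \;=\; \bigl(dw_1 - \tfrac{a}{b}\,d\bar w_2\bigr)\wedge\bigl(\tfrac{b}{a}\,k\,d\bar w_1 + (1-k)\,dw_2\bigr)
\]
to be of type $(2,0)$, and this $J$ depends explicitly on the soliton profile $k$. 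Integrability and the generalized K\"ahler condition are then verified via the Joyce description (comparing $\Re\Omega_\pm$ and $\Im\Omega_\pm$), not by checking a scalar identity from the ODE. Your line ``integrability of each follows from the functional form of the ansatz'' therefore cannot go through as written: you first need to discover the correct $k$-dependent $J$, and for generic $(\ga,\gb)$ this was not previously known to exist.

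Two smaller points. First, the paper works with the Bismut formulation $(\rho_B^{1,1})_I = L_X\gw$ rather than the Riemannian system~\eqref{f:solitonricci}; this makes the reduction much cleaner, and you may find direct computation of $\Rc$, $H^2$, $\N^2 f$, $d^*H$ unnecessarily laborious by comparison. Second, for part~(3) the paper does not rely on monotonicity of a Perelman-type functional to obtain the crucial uniform estimates. Instead it introduces the substitution $s(x,t)=\kappa^{-1}\bigl(k(x-\mu t,t)\bigr)$, where $\kappa$ is the soliton profile, and applies the maximum principle on the compact Hopf surface to $\Phi=s-x$, trapping $k$ between two fixed translates of $\kappa$ for all time. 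It is this barrier argument, not the functional, that rules out degeneration as $x\to\pm\infty$; the gradient-flow structure enters only at the end to identify subsequential limits with the soliton.
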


\begin{rmk} It was not previously known if diagonal Hopf surfaces where $\brs{\ga} \neq \brs{\gb}$ admitted even-type generalized K\"ahler structure.  Our construction has exhibited such structures, along with a canonical metric of this type.
\end{rmk}

Combining Theorem \ref{t:mainthm1} with the rigidity results for solitons shown in \cite{Streetssolitons} and further structural results for generalized K\"ahler surfaces, we obtain a complete answer to the existence question for generalized K\"ahler steady solitons, and also the question of uniqueness among odd-type generalized K\"ahler structures.

\begin{cor} \label{c:cor1} Let $(M^4, g, I, J)$ be a compact steady generalized K\"ahler-Ricci soliton.  Then precisely one of the following holds:
\begin{enumerate}
\item $(M^4, I)$ is K\"ahler with $c_1(M) = 0$, $g$ is Calabi-Yau, and $I = \pm J$.
\item $(M^4, I)$ biholomorphic to a diagonal Hopf surface, specifically $I = I_{\ga\gb}$, and $J$ is biholomorphic to $I_{\ga \bar{\gb}}$.  In case the generalized K\"ahler structure has odd type, the metric $g$ is an invariant soliton as constructed in Theorem \ref{t:mainthm1}.
\end{enumerate}
\end{cor}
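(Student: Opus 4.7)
The plan is to combine three inputs: the classification of compact steady pluriclosed solitons on surfaces from \cite{Streetssolitons}, the existence and uniqueness package of Theorem \ref{t:mainthm1}, and the structure theory of generalized K\"ahler pairs on complex surfaces. First I would forget the partner complex structure $J$ and regard $(M^4, g, I)$ merely as a compact steady pluriclosed soliton: for any generalized K\"ahler triple $(g, I, J)$, the pair $(g, I)$ automatically satisfies \eqref{f:solitonricci} with $H = d^c \omega_I$. Applying the soliton classification of \cite{Streetssolitons} then places $(M^4, I)$ in one of two mutually exclusive classes: either it is K\"ahler with $c_1(M) = 0$, or it is biholomorphic to a diagonal Hopf surface $(M^4, I_{\ga\gb})$.

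In the K\"ahler case, $H = 0$, so the first equation in \eqref{f:solitonricci} degenerates to the gradient steady Ricci soliton equation $\Rc + \N^2 f = 0$, which on a compact manifold forces Ricci-flatness; hence $g$ is Calabi-Yau and $f$ is constant. The GK compatibility condition then gives $d^c_J \omega_J = -H = 0$, so $(g, J)$ is K\"ahler as well. To deduce $I = \pm J$ I would analyze the holomorphic Poisson bivector $\pi = \tfrac{1}{2}[I, J] g^{-1}$ canonically attached to the GK pair: it is a section of $\Lambda^2 T^{1,0}_I M \cong K_M^{-1}$. Since $K_M$ is trivial, $\pi$ is either identically zero (equivalent to $I = \pm J$) or nowhere vanishing; in the latter case $(M, I)$ is holomorphic symplectic, and the additional constraint coming from the \emph{soliton} condition (together with GK integrability) must be used to eliminate this possibility.

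In the Hopf surface case, the rigidity portion of \cite{Streetssolitons} asserts that any compact steady pluriclosed soliton on a diagonal Hopf surface is invariant under the maximal torus of biholomorphisms and is unique up to the one-parameter family of automorphisms generated by the soliton vector field. Combining this with the existence half of Theorem \ref{t:mainthm1} identifies $g$ with the canonical metric $g_{\ga\gb}$ up to gauge, settling the odd-type clause of (2). The structure theory of generalized K\"ahler structures on class $\mathrm{VII}$ surfaces, together with Theorem \ref{t:mainthm1}(1)--(2), then shows that any GK partner $J$ to $(g_{\ga\gb}, I_{\ga\gb})$ must be biholomorphic to $I_{\ga\bgb}$, completing case (2).

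The main obstacle I anticipate is the Calabi-Yau subcase: extracting the conclusion $I = \pm J$ from the steady soliton condition when the Poisson bivector $\pi$ is a priori nonzero. On hyperk\"ahler surfaces one has a continuous $S^2$-family of complex structures compatible with $g$, and one must carefully use the integrability of the GK data together with the steady (rather than merely pluriclosed) condition to force the partner $J$ into $\{\pm I\}$. By contrast, the Hopf case is handled almost entirely by the rigidity of \cite{Streetssolitons} and the existence statement of Theorem \ref{t:mainthm1}, with the GK structure theory providing only the bookkeeping for $J$.
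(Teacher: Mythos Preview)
Your overall architecture matches the paper's: reduce to a pluriclosed soliton via \cite{Streetssolitons}, obtain the K\"ahler Calabi--Yau versus Hopf dichotomy, and then treat the two cases separately.  In the Calabi--Yau case you are actually more careful than the paper's proof, which simply quotes \cite{Streetssolitons} and the structural discussion of \S\ref{s:compGK} without spelling out the $I=\pm J$ conclusion; your concern about the hyperk\"ahler subcase is legitimate, though the paper apparently regards it as absorbed in the cited structural results.

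There is, however, a genuine gap in your treatment of the Hopf case.  You assert that ``the rigidity portion of \cite{Streetssolitons}'' already yields that any compact steady \emph{pluriclosed} soliton on a diagonal Hopf surface is invariant under the maximal torus and unique up to the soliton flow.  That is not what is available: the paper establishes this rigidity here, in \S5, and only in the odd-type \emph{generalized K\"ahler} setting.  The argument is not a black-box citation; it introduces the cohomology space $\mathcal H$ of Definition~\ref{d:cone}, proves the general uniqueness Proposition~\ref{p:uniqueness} via a Monge--Amp\`ere/maximum-principle comparison, computes $\dim_{\mathbb R}\mathcal H=2$ on diagonal Hopf surfaces (Lemma~\ref{lm:cohomology}), and finally shows in Proposition~\ref{p:uniqueness_hopf} that $T^3$ acts trivially on $\mathcal H$, forcing $\gamma^*g=g$ for every $\gamma\in T^3$.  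Only after this $T^3$-invariance is established does the ODE uniqueness of Proposition~\ref{p:existence} apply.  Note in particular that the presence of $J$ is used: the automorphism group preserving \emph{both} $I$ and $J$ is the commutative group $(\C^*)^2/\Gamma$, which is what makes the cohomology argument run.  Your route bypasses all of this by assuming a pluriclosed rigidity result that is not in \cite{Streetssolitons}, so as written the odd-type uniqueness clause of item~(2) is unsupported.
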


\textbf{Acknowledgements:} The authors thank Vestislav Apostolov and Max Pontecorvo for helpful conversations.
\section{Toric construction of solitons}
\subsection{Biholomorphisms of Hopf surfaces} \label{s:Hopfbackground}

Let $M$ be the diagonal Hopf surface with parameters $\ga,\beta \in \{z\in\C\ |\ 0<|z|<1\}$, i.e.
\begin{equation} \label{f:Hopfdef}
M = \C_{z_1,z_2}^2\backslash\{(0,0)\}/\Gamma,\quad
\Gamma\colon (z_1,z_2)\mapsto (\alpha z_1,\beta z_2).
\end{equation}
\noindent The action of the torus $(\C^\times)^2_{z_1,z_2}$ on $\C^2$ commutes with $\Gamma$ and descends to the action of $G=(\C^\times)^2_{z_1,z_2}/\Gamma\simeq T^3\times\R$ on $M$.  We use logarithmic coordinates $w_i = x_i+\sqrt{-1}y_i$ on the open dense set $(\C^\times)^2$, via
\begin{equation}
\exp\colon \C_{w_1,w_2}^2\to (\C^\times)_{z_1,z_2}^2, (w_1,w_2)\mapsto (\exp w_1,\exp w_2).
\end{equation}

A function or tensor descends to an $(S^1)^2$ invariant function or tensor on $(\C^\times)^2/\Gamma\subset M$ if and only if it is
invariant under the action of vector fields $\pd/\pd y_1,\ \pd/\pd y_2$, as well as the pull-back action of $\Gamma$:
	\[
	(w_1,w_2)\mapsto (w_1+\log \alpha,w_2+\log \beta).
	\]
In order to construct an explicit family of invariant metrics, we enforce these conditions even further, we will upgrade invariance under $\gG$ to invariance under a vector field which generates $\Gamma$.  In particular, let 
\begin{equation} \label{f:Zdef}
Z=\Re(\log\alpha)\pd/\pd x_1+\Re(\log\beta)\pd/\pd x_2.
\end{equation}
For simplicity we denote
\[
a=\Re(\log \alpha), \qquad b=\Re(\log\beta), \qquad Z=a\pd/\pd x_1+b\pd/\pd x_2.
\]
The time-one flow of the vector field $Z$ generates the action of $\gG$ modulo translation in $(y_1,y_2)$-plane. 
The function $\Re(-bw_1+aw_2)$ is invariant under $Z$, so making the linear change of coordinates
\[
u_1=\frac{b}{a} w_1 - w_2=\frac{b}{a}\log z_1-\log z_2,\quad u_2= w_2=\log z_2,
\]
invariance under the $(S^1)^2$ action and $Z$ is equivalent to the invariance with respect to the translations in $\Im(u_1)$, $\Im(u_2)$, $\Re(u_2)$. In what follows tensors and functions invariant under these 3 vector fields are referred to as invariant.

\subsection{Invariant metrics}

Up to multiplication by a positive constant, in the coordinates $(u_1,u_2)$, any Hermitian invariant metric is given by
\begin{equation} \label{f:invmetrics}
g_{(u_1,u_2)}=
\left[
\begin{matrix}
k & n+\sqrt{-1} m\\
n-\sqrt{-1}m & p
\end{matrix}
\right],
\end{equation}
where the real-valued functions $k(x),n(x),m(x),p(x)$ are evaluated at $x = 2\Re(u_1)=u_1+\bar{u_1}$.  

\begin{lemma}\label{lm:pluriclosed_constant}
	Let $g$ be a Hermitian metric on a compact complex surface $M$. Assume that $g$ admits a non-zero holomorphic Killing vector field $X\in\Gamma(M, T^{1,0}M)$. Then $g$ is pluriclosed if and only if $g(X,\bar X)$ is constant.
\begin{proof}
	Choose local coordinates $(z_1,z_2)$ on $M$ such that $X=\tfrac{\pd}{\pd z_1}$. The fundamental form $\omega_g$ is written as
	\[
	\omega_g=\sqrt{-1}(Fdz_1\wedge d\bar z_1+Gdz_1\wedge d\bar{z_2}+\bar G dz_2\wedge d\bar{z_1}+H dz_2\wedge d\bar{z_2})
	\]
	for some complex-valued functions $F,G,H$. Since $\tfrac{\pd}{\pd z_1}$ is a Killing vector field, $F,G,H$ depend only on $z_2$ and $\bar z_2$, therefore
	\[
	\pd\bar{\pd}\omega_g=-\sqrt{-1}F_{z_2\bar{z_2}}dz_1\wedge d{\bar z_1}\wedge dz_2\wedge d{\bar z_2}.
	\]
	Hence, $\omega_g$ is pluriclosed if and only if the function $g(X,\bar X)=F$ is pluriharmonic. On a compact surface the latter is equivalent to $F$ being a constant.
\end{proof}
\end{lemma}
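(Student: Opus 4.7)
The plan is to reduce the global pluriclosed condition to a pluriharmonic equation on the function $F := g(X,\bar X)$, via a local holomorphic straightening of $X$.

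First, since $X\in\Gamma(M,T^{1,0}M)$ is a nonzero holomorphic vector field on a complex surface, its zero set $Z$ is a proper analytic subvariety, so $U := M\setminus Z$ is open and dense. On $U$, I would choose local holomorphic coordinates $(z_1,z_2)$ in which $X = \partial/\partial z_1$, and write
\[
\omega_g = \sqrt{-1}\bigl(F\,dz_1\wedge d\bar z_1 + G\,dz_1\wedge d\bar z_2 + \bar G\,dz_2\wedge d\bar z_1 + H\,dz_2\wedge d\bar z_2\bigr),
\]
with $F = g(X,\bar X)$. The Killing hypothesis, combined with the holomorphicity of $X$, implies that both $\Re(X)$ and $\Im(X)$ preserve $g$, so $F$, $G$, $H$ depend only on $z_2$ and $\bar z_2$. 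A direct computation then shows that every term in $\partial\bar\partial\omega_g$ coming from derivatives of $G$ or $H$ contains a repeated $dz_2$ or $d\bar z_2$ and hence vanishes, leaving only $-\sqrt{-1}\,F_{z_2\bar z_2}\,dz_1\wedge d\bar z_1\wedge dz_2\wedge d\bar z_2$. Thus pluriclosedness of $\omega_g$ on $U$ is equivalent to pluriharmonicity of $F$ there.

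To globalize, I would note that $F = g(X,\bar X)$ is smooth on all of $M$ and $\partial\bar\partial F$ vanishes on the dense open set $U$, hence everywhere on $M$. Since $M$ is compact and $F$ is real-valued, the maximum principle for pluriharmonic functions (locally the real parts of holomorphic functions) forces $F$ to be constant, giving the \emph{only if} direction. The converse follows from the same local formula. The main subtlety of the approach is the global-to-local translation --- ensuring that the local straightening yields a meaningful global conclusion and that the maximum principle applies across the zero locus of $X$ --- but both issues dissolve via density of $U$ and smoothness of $F$ on all of $M$.
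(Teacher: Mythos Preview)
Your argument is correct and follows essentially the same route as the paper's proof: straighten $X$ locally, observe that the metric coefficients are independent of $z_1,\bar z_1$, compute $\partial\bar\partial\omega_g$, and conclude by the maximum principle on the compact manifold. Your treatment is in fact slightly more careful than the paper's, since you explicitly handle the zero locus of $X$ by working on the dense open set $U=M\setminus Z$ and then extending the conclusion $\partial\bar\partial F=0$ across $Z$ by continuity.
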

Lemma~\ref{lm:pluriclosed_constant} implies that an invariant metric as in (\ref{f:invmetrics}) is pluriclosed if and only if $p \equiv const > 0$. Since we only deal with pluriclosed metrics, up to an overall scaling we can assume that $p \equiv 1$, which we will do henceforth.  
Switching back to $(z_1,z_2)$ coordinates, we have
\begin{equation}\label{f:invmetrics_z}
g_{(z_1,z_2)}=
\left[
\begin{matrix}
\cfrac{b^2}{a^2}\cdot\cfrac{k}{|z_1|^2} & \cfrac{b}{a}\cdot\cfrac{-k+n-\sqrt{-1}m}{\bar{z_1}z_2}\\
\cfrac{b}{a}\cdot\cfrac{-k+n+\sqrt{-1}m}{\bar{z_2}z_1} & \cfrac{k-2n+1}{|z_2|^2}
\end{matrix}
\right].
\end{equation}

Since we want to extend $g_{(z_1,z_2)}$ from the open part $(\C^\times)^2$ to the whole $\C^2\backslash\{(0,0)\}$, the functions $k(x), n(x), m(x)$ have to satisfy necessary asymptotic conditions.  Deriving the precise sufficient conditions is tedious and ultimately unnecessary in this generality, and instead we will show that the specific solitons we construct in this ansatz extend directly.

\begin{prop}\label{p:extension_necessary} If a pluriclosed metric as in (\ref{f:invmetrics_z}) extends to a smooth Hermitian metric on $\C^2\backslash\{(0,0)\}$ then $k(x),n(x),m(x)$ satisfy the following asymptotics (in each identity $C$ denotes a generic constant):
	\begin{enumerate}
		\item as $x\to +\infty$:
		\begin{enumerate}
			\item $k(x)-2n(x)+1=C\exp(-x)+
			\underline{o}(\exp(-x))$, where $C>0$,
			\item $\log(k(x)-2n(x)+1)'=-1+\overline{O}(\exp(-\frac{x}{2}))$,
			\item $k(x)-n(x)=\overline{O}(\exp(-\frac{x}{2}))$,
			\item $m(x)=\overline{O}(\exp(-\frac{x}{2}))$.
		\end{enumerate}
		\item as $x\to -\infty$
		\begin{enumerate}
			\item $k(x)=C\exp(\frac{a}{b}x)+
			\underline{o}(\exp(\frac{a}{b}x))$, where $C>0$,
			\item $\log(k(x))'=\frac{a}{b}+\overline{O}(\exp(\frac{a}{2b}x))$,
			\item $k(x)-n(x)=\overline{O}(\exp(\frac{a}{2b}x))$,
			\item $m(x)=\overline{O}(\exp(\frac{a}{2b}x))$.
		\end{enumerate}
	\end{enumerate}
	
	\begin{proof} We give the elementary proof of (2a) and (2b), the remaining cases being straightforward exercises.  The metric $g_{(z_1,z_2)}$ defined by formula \eqref{f:invmetrics_z} extends to a smooth metric on $\C^2\backslash\{(0,0)\}$ if and only if each of the functions
		\[
		\frac{k}{|z_1|^2}, \quad \frac{n-k+\sqrt{-1}m}{\bar{z_1}{z_2}}, \quad \frac{k-2n+1}{|z_2|^2},
		\]
		smoothly extends across
		\[
		\{z_1=0,z_2\neq 0\}\cup\{z_1\neq 0,z_2=0\}\subset \C^2\backslash\{(0,0)\}.
		\]
		Denote $r_1=|z_1|$, $r_2=|z_2|$, and let us deduce asymptotic (2a) by examining extension of $k/|z_1|^2$ to $\{z_1=0,z_2\neq 0\}$. A necessary condition for a smooth extension is that for any $r_2\neq 0$ there exists a \emph{positive} finite limit:
		\begin{equation}
		\begin{split}
		\lim_{r_1\to 0} \frac{k\Bigl(\cfrac{b}{a}\log r_1^2-\log r_2^2\Bigr)}{r_1^2}.
		\end{split}
		\end{equation}
		This limit exists and positive for every $r_2\neq 0$ if and only if the limit
		\[
		\lim_{r_1\to 0}\frac{k\Bigl(\cfrac{b}{a}\log r_1^2-\log r_2^2\Bigr)}{\exp\Bigl(\cfrac{a}{b}\cdot(\cfrac{b}{a}\log r_1^2-\log r_2^2)\Bigr)}=
		\lim_{x\to -\infty}\frac{k(x)}{\exp(\frac{a}{b}x)}
		\]
		exists and positive. This is equivalent to (2a). 
		
		To prove (2b) we observe that 
		\[
		\frac{d}{dr_1}\log\left(k\Bigl(\cfrac{b}{a}\log r_1^2-\log r_2^2\Bigr)/r_1^2\right)=\frac{2}{r_1}\left(\frac{b}{a}(\log k)'_x-1\right)
		\]
		has to have finite limit for any $r_2\neq 0$ as $r_1\to 0$. This limit exists if and only if 
		\[
		\frac{(\log k)'_x-\frac{a}{b}}{\exp(\frac{a}{2b}x)}
		\]
		has finite limit as $x\to -\infty$. This proves (2b).
	\end{proof}
\end{prop}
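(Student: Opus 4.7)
The plan is to impose smoothness of the three metric components
\[
\frac{k}{|z_1|^2},\qquad \frac{n-k+\sqrt{-1}m}{\bar z_1 z_2},\qquad \frac{k-2n+1}{|z_2|^2}
\]
across each of the two divisors $\{z_1=0\}$ and $\{z_2=0\}$ in $\C^2\setminus\{(0,0)\}$, and to translate each such condition into an asymptotic statement for $k,n,m$ via the coordinate relation $x=(b/a)\log|z_1|^2-\log|z_2|^2$. Approaching $\{z_2=0\}$ with $|z_1|$ bounded away from $0$ corresponds to $x\to+\infty$, and approaching $\{z_1=0\}$ to $x\to-\infty$, so statements (1) and (2) are mirror images of one another under the exchange $(a,b,x)\leftrightarrow(b,a,-x)$ together with the swap of the two diagonal numerators. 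It therefore suffices to carry out one side in detail and deduce the other by symmetry.

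For the diagonal entries, I would first impose existence of a positive finite $C^0$ limit at the appropriate divisor, which immediately yields (1a) and (2a). Next, for the first-derivative conditions (1b) and (2b), I would differentiate each diagonal component in $r_i=|z_i|$; the chain rule introduces an extra factor of $1/r_i$ which scales as $\exp(\pm x/2)$, and requiring a finite limit of that derivative produces the refined asymptotics $(\log(k-2n+1))'=-1+\overline O(\exp(-x/2))$ and $(\log k)'=a/b+\overline O(\exp((a/2b)x))$.

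For the off-diagonal entry $(n-k\pm\sqrt{-1}m)/(\bar z_1 z_2)$, the denominator vanishes to first order on each divisor, so boundedness of the ratio forces the real part $n-k$ and the imaginary part $m$ to decay at the rate of $r_i$, that is, $\exp(\pm x/2)$ modulo bounded factors coming from the other coordinate. Splitting the numerator into real and imaginary parts then yields (1c), (1d), (2c), (2d).

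The main technical subtlety, and the only real obstacle beyond routine bookkeeping, is the appearance of the half-power $\exp(\pm x/2)$ in the remainder terms. It is not present in the leading $C^0$ asymptotics, which would naively suggest the full powers $\exp(-x)$ or $\exp((a/b)x)$, but arises precisely from the Jacobian factor $1/r_i$ entering either through the radial derivative in the diagonal case or through the first-order vanishing of the off-diagonal denominator. Once this pattern is recognized, each of the eight asymptotics reduces to a direct limit computation of the same type as the (2a)--(2b) argument, and I would write out just one representative case for each of the three types (diagonal $C^0$, diagonal $C^1$, off-diagonal) before invoking symmetry.
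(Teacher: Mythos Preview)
Your approach is essentially identical to the paper's: isolate the three metric coefficients, impose that each extends smoothly across the two coordinate divisors, and translate the resulting $C^0$ and $C^1$ conditions on $r_i$ into asymptotics in $x$ via the chain rule, with the $1/r_i$ Jacobian producing the half-exponent remainders; the paper writes out only (2a) and (2b) and declares the rest ``straightforward exercises,'' exactly as you propose. One small caveat: the substitution $(a,b,x)\leftrightarrow(b,a,-x)$ is not a literal symmetry of the problem (the exponents on the two sides are $-1$ and $a/b$, not related by that swap, because $x$ is built asymmetrically from $r_1,r_2$), so you should treat the two ends as analogous computations rather than appeal to a formal symmetry---but your plan to write out a representative case of each type already handles this.
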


\subsection{Construction of solitons}

\begin{defn} We say that a pluriclosed structure $(M^{2n}, g, I)$ is a \emph{steady soliton} if there exists a holomorphic vector field $X$ such that
\begin{align} \label{f:Bismutsoliton}
(\rho_B^{1,1})_I = L_X \gw,
\end{align}
where $\N^B = \N^{LC} + \tfrac{1}{2} g^{-1} d^c \gw$ is the associated Bismut connection, defining a representative of $c_1$ denoted $\rho_B$.  For more background on this condition and its equivalence to (\ref{f:solitonricci}) we refer the reader to \cite{Streetssolitons}.  We note that in (\cite{Streetssolitons} \S 3) an argument was provided in the case of complex surfaces why the vector field $X$ must a priori be holomorphic, but in fact this follows directly from examining the equation (\ref{f:Bismutsoliton}) and its exterior derivative, in any dimension.
\end{defn}

\begin{lemma} \label{l:Bismutcurvature} Given an invariant pluriclosed metric as in (\ref{f:invmetrics}), one has
\begin{equation}\label{eq:ricci}
\begin{split}
(\rho_B^{1,1})_{(u_1,u_2)} =-&\ \left[
\begin{matrix}
\left(\frac{k'}{V} \right)' & \left( \frac{n' + \i m'}{V} \right)'\\
\left( \frac{n' - \i m'}{V} \right)' & 0
\end{matrix}
\right],
\end{split}
\end{equation}
where
\[
V=\det g_{(u_1,u_2)}=k-n^2-m^2.
\]
\begin{proof}
	We will use the following identity:
	\[
	\rho^{1,1}_B=\rho_C-\pd\pd^*\omega-\bar{\pd}\bar{\pd^*}\omega=
	-\pd\left(\frac{\sqrt{-1}}{2}\bar{\pd}\log V + \pd^*\omega\right)-\bar{\pd}\left(-\frac{\sqrt{-1}}{2}\pd\log V + \bar{\pd^*}\omega\right),
	\]
	where
	$\rho_C=-\sqrt{-1}\pd\bar{\pd}\log V$ is the Chern-Ricci form. For $\pd^*\omega$ we have a general formula
	\[
	(\pd^*\omega)_{\bar j}=\sqrt{-1}g^{m\bar n}(\pd_{\bar n}g_{m\bar j}-\pd_{\bar j}g_{m\bar n}).
	\]
	Now, using that $V$ and all the entries of $g_{(u_1,u_2)}$, $g^{-1}_{(u_1,u_2)}=\frac{1}{V}\left[
	\begin{matrix}
	1 & -n-\sqrt{1}m\\
	-n+\sqrt{-1}m & k
	\end{matrix}
	\right]$ depend only on $u_1+\bar{u_1}$, we find
	\begin{equation}\label{eq:aux:ricci_1}
	\begin{split}
	\frac{\sqrt{-1}}{2}\bar{\pd}\log V + \pd^*\omega
	&=
	\sqrt{-1}
	\left(
	\frac{V'}{2V}d{\bar u_1}+\frac{n'+\sqrt{-1}m'}{V}d{\bar u_2}+\frac{(n'+\sqrt{-1}m')(n-\sqrt{-1}m)}{V}d\bar{u_1}
	\right)\\
	&=\sqrt{-1}
	\left(
	\frac{k'}{2V} d\bar{u_1}+\frac{n'+\sqrt{-1}m'}{V}d\bar{u_2}
	\right)
	+
	\frac{n'm-m'n}{V}d{\bar u_1}.
	\end{split}
	\end{equation}
	Similarly,
	\begin{equation}\label{eq:aux:ricci_2}
	-\frac{\sqrt{-1}}{2}\pd\log V + \bar{\pd^*}\omega=
	-\sqrt{-1}
	\left(
	\frac{k'}{2V} d u_1+\frac{n'-\sqrt{-1}m'}{V}d u_2
	\right)
	+
	\frac{n'm-m'n}{V}du_1.
	\end{equation}
	Since all the coefficients of the forms in~\eqref{eq:aux:ricci_1} and~\eqref{eq:aux:ricci_2} again depend only on $u_1+\bar{u_1}$, we directly verify that
	\[
	\rho_B^{1,1}=-\pd\left(\frac{\sqrt{-1}}{2}\bar{\pd}\log V + \pd^*\omega\right)-\bar{\pd}\left(-\frac{\sqrt{-1}}{2}\pd\log V + \bar{\pd^*}\omega\right)
	\]
	is given by~\eqref{eq:ricci}.
\end{proof}
\end{lemma}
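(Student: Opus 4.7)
The plan is to use the standard identity relating the Bismut--Ricci, Chern--Ricci, and torsion codifferential on a pluriclosed surface,
\[
\rho_B^{1,1} = \rho_C - \partial\partial^*\omega - \bar\partial\bar\partial^*\omega,
\]
and, using $\rho_C = -\sqrt{-1}\,\partial\bar\partial\log V$, to repackage this as
\[
\rho_B^{1,1} = -\partial\Bigl(\tfrac{\sqrt{-1}}{2}\bar\partial\log V + \partial^*\omega\Bigr) - \bar\partial\Bigl(-\tfrac{\sqrt{-1}}{2}\partial\log V + \bar\partial^*\omega\Bigr).
\]
The two parenthesized expressions are complex conjugates, so it suffices to compute one $(0,1)$-form explicitly and then differentiate.

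Next I will compute $\partial^*\omega$ from the pointwise formula $(\partial^*\omega)_{\bar j} = \sqrt{-1}\,g^{m\bar n}\bigl(\partial_{\bar n}g_{m\bar j} - \partial_{\bar j}g_{m\bar n}\bigr)$. The key simplification is that in the coordinates $(u_1,u_2)$ every component of $g$ depends only on $x = u_1+\bar u_1$, and the inverse metric is given explicitly from (\ref{f:invmetrics}) with $\det = V = k - n^2 - m^2$. Thus the computation reduces to single-variable calculus in the entries $k,n,m$ and their derivatives. The choice of grouping with $\tfrac{\sqrt{-1}}{2}\bar\partial\log V$ is exactly what is needed to kill the $V'/(2V)$-type terms, leaving a $d\bar u_1$-coefficient of the form $\sqrt{-1}\,k'/(2V)$ up to a \emph{real} correction $(n'm-m'n)/V$, and a clean $d\bar u_2$-coefficient $\sqrt{-1}(n'+\sqrt{-1}m')/V$.

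Finally I apply $-\partial$ to the $(0,1)$-form and $-\bar\partial$ to its conjugate. Because all coefficients still depend only on $x$, only the $\partial_{u_1}\,du_1\wedge$ and $\partial_{\bar u_1}\,d\bar u_1\wedge$ parts contribute, and the derivatives become ordinary $x$-derivatives of $k'/V$, $(n'\pm\sqrt{-1}m')/V$. Matching off-diagonal and diagonal components produces exactly the matrix claimed in (\ref{eq:ricci}); the $(2,\bar 2)$ entry vanishes because the original $(0,1)$-form has no $d\bar u_1$ contribution from the $\bar u_2$-slot after differentiation.

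The main point to keep track of is the cancellation of the real correction term $(n'm-m'n)/V\,d\bar u_1$ and its complex conjugate. Since this coefficient is real and depends only on $x$, the contributions of $-\partial$ applied to $(n'm-m'n)/V\,d\bar u_1$ and of $-\bar\partial$ applied to $(n'm-m'n)/V\,du_1$ are equal in magnitude and opposite in sign, so they drop out of $\rho_B^{1,1}$; this is what makes the final formula so clean and is essentially the only nontrivial bookkeeping in the argument.
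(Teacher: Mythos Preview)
Your proposal is correct and follows essentially the same approach as the paper's proof: the same identity $\rho_B^{1,1}=\rho_C-\partial\partial^*\omega-\bar\partial\bar\partial^*\omega$, the same regrouping into the two conjugate one-forms, the same pointwise formula for $\partial^*\omega$, and the same reduction to single-variable derivatives in $x=u_1+\bar u_1$. Your explicit remark on the cancellation of the real correction term $(n'm-m'n)/V$ is exactly the content of the paper's phrase ``we directly verify,'' so nothing is missing.
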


\begin{prop} \label{p:existence} Let $(M^4, I_{\ga\gb})$ be a diagonal Hopf surface.  There exists a pluriclosed steady soliton $\gw_{\ga\gb}$ on $(M^4, I_{\ga\gb})$ given by
\begin{align} \label{f:soliton}
\gw_{\ga\gb} =&\ \i \left( \frac{b^2}{a^2} \frac{k}{\brs{z_1}^2} dz_1 \wedge d \bz_1 + \frac{1-k}{\brs{z_2}^2} dz_2 \wedge d \bz_2 \right),
\end{align}
where $k = k(x)$, $x=\frac{b}{a} \log \brs{z_1}^2 - \log \brs{z_2}^2$ is a solution of the ODE
\begin{align*}
k' = k(1-k)\left( \left(1- \frac{a}{b} \right) k + \frac{a}{b} \right).
\end{align*}
\begin{proof}
Let $(u_1,u_2)$ be the coordinates which we have constructed above. The automorphism group of a generic Hopf surface has real dimension 4 with infinitesimal action generated by $\Re(u_1)$, $\Im(u_1)$, $\Re(u_2)$, $\Im(u_2)$~\cite{NambaHopfAut}. Since our metrics are invariant under translations in $\Im(u_1)$, $\Im(u_2)$, $\Re(u_2)$, it is natural to look for a soliton with the drift vector field of the form $\mu Y$, where $Y=\pd_{\Re(u_1)}$, $\mu\in\R$:
\[
\rho_B^{1,1} = \mu  L_{Y}g_{(u_1,u_2)}.
\]
By an elementary computation of $L_Y g_{(u_1,u_2)}$ together with Lemma \ref{l:Bismutcurvature} we reduce soliton equation to the system of equations
\begin{equation}\label{eq:pluriclosed_soliton}
\left\{
\begin{split}
\left(\frac{k'}{V}\right)'&=\mu k'\\
\left(\frac{m'}{V}\right)'&=\mu m' \\
\left(\frac{n'}{V}\right)'&=\mu n'
\end{split}
\right.
\end{equation}
Integrating each equation once, we obtain
\begin{equation}
\left\{
\begin{split}
V&=\frac{k'}{\mu k+c_k}\\
V&=\frac{m'}{\mu m+c_m}\\
V&=\frac{n'}{\mu n+c_n}
\end{split}
\right.
\end{equation}
Comparing the right hand sides of these equations, we conclude that $m$ and $n$ are affine functions of $k$, i.e., $m=a_mk+b_m$, and $n=a_nk+b_n$. Since $V=k-n^2-m^2$ is positive, and using the asymptotics of $k$ as $x\to-\infty$ from Proposition \ref{p:extension_necessary}, we conclude that $b_n=b_m=0$, i.e., the functions $k,m,n$ are proportional to each other.  In this case, the off-diagonal elements of $g_{(z_1,z_2)}$ have the correct asymptotic as $x\to-\infty$ iff they vanish, i.e., we have
\[
k=n,\quad m=0.
\]
Thus we are reduced to solving a single ODE for $k$:
\begin{equation}\label{eq:ODE}
k'=k(1-k)(\mu k+c_k).
\end{equation}
To identify constants $\mu$ and $c_k$, we further analyze behavior of $k(x)$ as $x\to\pm\infty$. By Proposition~\ref{p:extension_necessary} (2b) we know that
\[
\frac{k'}{k}-\frac{a}{b}\to 0 \quad \mbox { as } x\to-\infty,
\]
while at the same time 
\[
(1-k)(\mu k+c_k)\to c_k \quad \mbox { as } x\to-\infty,
\]
so $c_k=\frac{a}{b}$. Similarly considering behavior at $+\infty$ we find $\mu+c_k=1$.
\begin{equation}
c_k=a/b,\quad \mu=1-a/b.
\end{equation}

If $a=b$, then $M$ is a Hopf surface with $|\alpha|=|\beta|$, $\mu=0$ (i.e., $g$ is a stationary metric). In this case $k(x)=1/({1+Ce^{-x}})$, $C>0$, and $g$ is equivalent to the round metric $g_{\mathrm{Eucl}}/|z|^2$ via an automorphism of $(M,I)$.

From now on assume $a\neq b$. By separation of variables, the ODE~\eqref{eq:ODE} is thus equivalent~to
\begin{equation*}
x+const=\frac{b}{a-b}\int \frac{1}{k(k-1)(k-\frac{a}{a-b})}\,dk.
\end{equation*}
Computing the integral, we conclude that
\begin{equation}\label{eq:final}
x+const=-\log(1-k)+\log\Bigl(\frac{a}{a-b}-k\Bigr)\frac{a-b}{a}+\frac{b}{a}\log k.
\end{equation}
The function $x(k)$ monotonically increases on $(0,1)$ from $-\infty$ to $+\infty$, so~\eqref{eq:final} yields a well-defined function
$k(x)\colon\R\to(0,1)$. The corresponding metric in the original $z$ coordinates is
\begin{equation} \label{f:solitonzcoords}
g_{(z_1,z_2)}=
\left[
\begin{matrix}
\cfrac{b^2}{a^2}\cdot\cfrac{k}{|z_1|^2} & 0\\
0 & \cfrac{1-k}{|z_2|^2}
\end{matrix}
\right].
\end{equation}

It remains to check that the metric defined by $k(x)$ on $(\C^*)^2_{(z_1,z_2)}$ extends to the whole $\C^2\backslash\{(0,0)\}$. Indeed, in the neighbourhood of a point $(z_1,0)$ with $|z_1|\neq 0$ we have
\[
r_2^2=r_1^{\frac{2b}{a}}(1-k)\left(\frac{a}{a-b}-k\right)^{-\frac{a-b}{a}}k^{-\frac{b}{a}},\quad r_1=|z_1|, r_2=|z_2|,
\]
and $r_2$ is a smooth function of $(r_1,k)$ in the neighbourhood of $(r_1,1)$. Hence the implicitly defined function $k$ extends to a smooth function of $r_1$, $r_2$ in the neighbourhood of $(r_1,0)$. This ensures that $(g_{(z_1,z_2)})_{2\bar 2}$ extends across $z_2=0$. Similarly, $(g_{(z_1,z_2)})_{1\bar 1}$ extends across $z_1=0$.  Finally, we note that the choice of the constant of integration in~\eqref{eq:final} corresponds to the choice of the time slice of the corresponding pluriclosed soliton.
\end{proof}
\end{prop}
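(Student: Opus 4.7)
The plan is to work within the invariant metric ansatz \eqref{f:invmetrics}, so the soliton equation \eqref{f:Bismutsoliton} reduces to an ODE system for the scalar functions $k, n, m$. First I would choose the drift vector field. Since the only automorphism direction not already eliminated by the three-dimensional torus invariance is translation along $\Re(u_1)$ (using the classical description of the automorphism group of a diagonal Hopf surface), the natural ansatz is $X = \mu\, \partial_{\Re(u_1)}$ for a real constant $\mu$ to be determined. Applying $L_X$ to \eqref{f:invmetrics} and comparing with Lemma \ref{l:Bismutcurvature} turns the soliton equation into three scalar ODEs of the schematic form $(f'/V)' = \mu f'$ for $f \in \{k, n, m\}$.

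The next step is to integrate each ODE once, yielding $V = f'/(\mu f + c_f)$. Equating these three expressions for $V$ forces both $n$ and $m$ to be affine in $k$. Invoking the asymptotic conditions of Proposition \ref{p:extension_necessary} together with the positivity of $V = k - n^2 - m^2$, I would argue that the affine constant terms must vanish, so $n$ and $m$ are proportional to $k$; then imposing the correct decay on the off-diagonal entries of \eqref{f:invmetrics_z} as $x \to -\infty$ pins down $n = k$ and $m = 0$. The system collapses to the single scalar ODE $k' = k(1-k)(\mu k + c_k)$.

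The remaining constants are fixed by matching the two-sided asymptotics of Proposition \ref{p:extension_necessary}: the behavior as $x \to -\infty$ forces $c_k = a/b$ and the behavior as $x \to +\infty$ forces $\mu + c_k = 1$, hence $\mu = 1 - a/b$. With these values the ODE is separable, and an elementary integration produces an implicit relation exhibiting $x$ as a strictly monotone function of $k \in (0,1)$ ranging over all of $\R$. This yields a unique smooth solution $k\colon \R \to (0,1)$ up to an additive constant of integration, which corresponds to translation along the flow of the soliton vector field.

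Finally one has to check that the resulting metric, defined a priori only on $(\C^\times)^2/\Gamma$, extends smoothly across $\{z_1 = 0\}$ and $\{z_2 = 0\}$. I expect this to be the main obstacle. The plan is to use the implicit form of the solution to express $r_2^2$ as a smooth function of $r_1$ and $k$ near $k = 1$, then invoke the implicit function theorem to extend $k$ smoothly across $\{z_2 = 0\}$, with the symmetric argument near $k = 0$ handling $\{z_1 = 0\}$. The diagonal shape of \eqref{f:solitonzcoords} then guarantees that both metric entries extend smoothly to $\C^2 \setminus \{0\}$, after which $\Gamma$-invariance descends the metric to a pluriclosed steady soliton on $M$.
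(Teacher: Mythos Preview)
Your proposal is correct and follows essentially the same route as the paper's proof: the same choice of drift vector field, the same reduction to the three ODEs $(f'/V)'=\mu f'$, the same affine-in-$k$ conclusion pinned down by Proposition~\ref{p:extension_necessary} to force $n=k$, $m=0$, the same determination of $c_k=a/b$ and $\mu=1-a/b$ from the two-sided asymptotics, and the same implicit-function-theorem argument for the smooth extension across the elliptic curves.
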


\section{Compatible generalized K\"ahler structures} \label{s:compGK}

We recall that a generalized K\"ahler manifold $(M^{2n}, g, I, J)$ consists of a Riemannian metric compatible with two integrable complex structures $I$ and $J$, such that furthermore
\begin{align*}
d^c_I \gw_I = H = - d^c_J \gw_J, \qquad d H = 0.
\end{align*}
For more background on these structures consult \cite{GualtieriThesis}.  The pluriclosed flow preserves generalized K\"ahler geometry \cite{GKRF}, and thus it is natural to expect that the solitons we have constructed should also be global attractors for the generalized K\"ahler-Ricci flow, and hence should be generalized K\"ahler structures.  We confirm this in this section.

\begin{defn} We say that a generalized K\"ahler structure $(M^{2n}, g, I, J)$ is a \emph{steady generalized K\"ahler-Ricci soliton} if there exists a vector field $X$, holomorphic with respect to both $I$ and $J$, such that
\begin{align*}
(\rho_B^{1,1})_I = L_X \gw_I.
\end{align*}
\end{defn}

\subsection{Odd-type generalized K\"ahler structure} \label{ss:oddGK}

In four dimensions, odd-type generalized K\"ahler structure (cf \cite{GualtieriThesis}) consists of a generalized K\"ahler triple triple $(g, I, J)$ such that $[I, J] = 0$, and $I \neq \pm J$, and corresponds precisely to the case of generalized K\"ahler for which $I$ and $J$ induce opposite orientations.  Such structures in general yield holomorphic splittings of the tangent bundle according to the eigenspaces of $\Pi = IJ$ (\cite{ApostolovGualtieri} Theorem 4).  Furthermore, Apostolov-Gualtieri classified such structures on compact complex surfaces (\cite{ApostolovGualtieri} Theorem 1), and among Hopf surfaces these exist precisely on a certain subclass of diagonal Hopf surfaces, and all are finitely covered by a primary diagonal Hopf surface.  For these examples the pair of complex structures are determined by the standard complex structure on $\mathbb C^2$ together with the complex structure where the orientation of the $z_2$-plane is reversed.  These both which descend to every quotient diagonal Hopf surface, and we refer to the them as $I_{\ga\gb}, J_{\ga\gb}$.  Incidentally, using the map $(z_1, z_2) \to (z_1, \bar{z}_2)$ it follows that $J_{\ga\gb}$ is biholomorphic to $I_{\ga \bgb}$.  As explained in (\cite{ApostolovGualtieri} pg. 18), any metric compatible with commuting complex structures $I$ and $J$ which is pluriclosed with respect to $I$ is automatically generalized K\"ahler.  By inspecting formula (\ref{f:solitonzcoords}), the solitons are manifestly compatible with both complex structures, and by construction are also pluriclosed, hence $(g_{\ga \gb}, I_{\ga\gb}, I_{\ga\bgb})$ forms a generalized K\"ahler triple, as claimed.

\subsection{Even-type generalized K\"ahler structure}

In four dimensions, even type generalized K\"ahler structure is equivalent to asking that $I$ and $J$ induce the same orientation.  In this setting a key role is played by the set
\begin{align*}
T = \{p \in M\ |\ I(p) = \pm J(p) \}.
\end{align*}
As shown in (\cite{AGG, PontecorvoCS}), $T$ is the support of a common effective divisor in each complex surface.  Furthermore, if $M$ has odd first Betti number, then $T$ is disconnected, and the minimal model of $M$ must be either a diagonal Hopf surface, or a parabolic or hyperbolic Inoue surface (\cite{FujikiPontecorvoNKT} Proposition 4.7, cf. \cite{AGG}).  Observe that it follows from this fact that even-type generalized K\"ahler structure on primary diagonal Hopf surfaces must have the two complex structures biholomorphic to $I_{\ga\gb}$ and $I_{\ga \bgb}$ for some $\ga, \gb$.  This is because $T$ must consist of the two elliptic curves, and the two complex structures must induce the same orientation on one curve (the component of $T$ where $I = J$) and the opposite orientation on the other (the component of $T$ where $I = -J$) (cf. \cite{AGG} Proposition 4).

Constructions of generalized K\"ahler structures exist on some parabolic Inoue surfaces (\cite{LeBrun}, \cite{FujikiPontASD}), and in fact on all hyperbolic Inoue surfaces (\cite{FujikiPontASD}).  Despite these intricate constructions, in the relatively simpler case of Hopf surfaces, the only known examples are on the standard Hopf surfaces satisfying $\ga = \gb$ (\cite{GualtieriGKG} Example 2.23).  Below we show that our solitons are in fact compatible with an even-type generalized K\"ahler structure, thus completing the existence question for generalized K\"ahler structure on Class $\mbox{VII}_0$ surfaces.  The question of the topology of the space of such structures on Hopf surfaces remains open.

\begin{prop} \label{p:eventype} Given $(M^4, I_{\ga\gb})$ a diagonal Hopf surface, any metric of the form
	\begin{equation} \label{f:aux:solitonzcoords}
	g_{(z_1,z_2)}=
	\left[
	\begin{matrix}
	\cfrac{b^2}{a^2}\cdot\cfrac{k}{|z_1|^2} & 0\\
	0 & \cfrac{1-k}{|z_2|^2}
	\end{matrix}
	\right]
	\end{equation}
is a part of an even-type generalized K\"ahler triple $(g, I_{\ga\gb}, J)$, where $J$ is biholomorphic to $I_{\ga \bgb}$. In particular, if $g_{\ga\gb}$ denotes the metric associated to the K\"ahler form $\gw_{\ga\gb}$ from Proposition~\ref{p:existence} then $(g_{\ga\gb}, I_{\ga\gb}, J)$ is a generalized K\"ahler-Ricci soliton.

\begin{proof} Let us first exhibit the second complex structure $J$ with which $g$ is compatible.  We use the logarithmic coordinates $w$ defined in \S \ref{s:Hopfbackground} to define the complex structure via a holomorphic volume form defined on an open dense subset.  For instance, the complex structure $I_{\ga\gb}$ is the complex structure for which the form
\begin{align*}
\Omega_- = dw_1 \wedge dw_2
\end{align*}
is of type $(2,0)$.  Likewise, let $J$ denote the unique complex structure for which
\begin{align} \label{f:Omegaplus}
\Omega_+=(dw_1- \frac{a}{b} d\bar{w_2})\wedge\Bigl(\frac{b}{a}k d\bar{w_1}+(1-k)dw_2\Bigr)=:\phi_1\wedge\phi_2
\end{align}
is of type $(2,0)$.  This is integrable since the sub-bundle of $\Lambda^{1}_\C(M)$ spanned by $\phi_1$ and $\phi_2$ is Frobenius-involutive:
\[
d\phi_i\wedge\phi_1\wedge\phi_2=0,\quad 1\le i\le 2.
\]
Furthermore, $g_{\ga\gb}$ is Hermitian with respect to $J$ since direct computation shows that
\[
g_{\ga\gb}(\phi_i,\phi_j)=0,\quad 1\le i,j\le 2.
\]
Further direct computation shows that $\Omega_{\pm}$ have the same real parts, and that
\begin{align*}
- \pi^{1,1}_{I_{\ga\gb}} \Im \Omega_+ = \i \left(\frac{b}{a} k dw_1 \wedge d \bar{w}_1 + \frac{a}{b} (1-k)d w_2 \wedge d \bar{w}_2 \right).
\end{align*}
Comparing against (\ref{f:invmetrics_z}), we see that the right hand side is a constant multiple of the $\gw_{\ga\gb}$, expressed in $w$ coordinates, and is in particular positive definite.  Thus, according to a well-known description of generalized K\"ahler metrics due to Joyce (cf. for instance \cite{ASNDGKCY} Lemma 2.14), the triple $(g, I_{\ga\gb}, J)$ defines a generalized K\"ahler structure on an open dense subset.  By construction the metric extends smoothly to the two elliptic curves.  An inspection of (\ref{f:Omegaplus}) shows that $J$ also extends smoothly to the two elliptic curves, and that the induced orientation on the curve $z_2 = 0$ agrees with that induced by $I_{\ga\gb}$, whereas the induced orientation on the curve $z_1 = 0$ is opposite that induced by $I_{\ga\gb}$.  As the biholomorphism type of a complex structure on a Hopf surface is determined by the periods of the elliptic curves, it follows that $J$ is biholomorphic to $I_{\ga \bar{\gb}}$.
\end{proof}
\end{prop}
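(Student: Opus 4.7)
The plan is to construct an explicit second complex structure $J$ on the open dense subset $U = (\mathbb{C}^\times)^2/\Gamma \subset M$ via a holomorphic $(2,0)$-form $\Omega_+$, verify compatibility with $g$, invoke Joyce's criterion to get the generalized Kähler condition, and finally extend smoothly across the two elliptic curves $\{z_1 = 0\}$ and $\{z_2 = 0\}$. The key starting point is an ansatz of the form $\Omega_+ = \phi_1 \wedge \phi_2$ where each $\phi_i$ is a complex $1$-form built from both $dw_i$ and $d\bar{w}_i$ (so that $J$ and $I_{\alpha\beta}$ do not coincide in type), with coefficients depending on $k$ and the constants $a, b$. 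The appearance of anti-holomorphic differentials is forced by the requirement that the resulting $J$ be of even type yet reverse the orientation on exactly one of the two elliptic curves, while the precise coefficients must be tuned so that the span of $\phi_1, \phi_2$ is an integrable distribution of $g$-null lines.

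Once $\Omega_+$ is chosen, I would verify three conditions in order. First, integrability of $J$: by Newlander-Nirenberg it suffices to check the Frobenius involutivity $d\phi_i \wedge \phi_1 \wedge \phi_2 = 0$ for $i = 1, 2$, which, since $k$ depends only on the single real variable $x$, reduces to a short algebraic identity in $k$ and $k'$. Second, Hermitian compatibility: using the diagonal form of $g$ in $w$-coordinates, $g(\phi_i, \phi_j) = 0$ is a direct linear algebra check, and indeed this is what pins down the coefficients in the ansatz. Third, the generalized Kähler property: by the standard Joyce-type criterion (cf.\ \cite{ASNDGKCY} Lemma 2.14), it suffices to show $\Re\Omega_+ = \Re\Omega_-$, where $\Omega_- = dw_1 \wedge dw_2$ is the holomorphic volume form for $I_{\alpha\beta}$, and that $-\pi^{1,1}_{I_{\alpha\beta}} \Im\Omega_+$ is a positive multiple of $\omega_g$. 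The positivity is equivalent to $0 < k < 1$, which holds throughout by construction.

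Finally, I would extend $J$ across the two elliptic curves by inspecting $\Omega_+$ in local $(z_1, z_2)$-coordinates in the limits $k \to 0$ (on $z_1 = 0$) and $k \to 1$ (on $z_2 = 0$). In each limit one factor $\phi_i$ becomes proportional to a single $(1,0)$-form on the surviving elliptic curve, so $J$ extends smoothly and its action on the tangent space of that curve is easily compared with $I_{\alpha\beta}$: on one curve the induced orientations agree, on the other they disagree. Since a diagonal Hopf surface is determined up to biholomorphism by the periods of its two invariant elliptic curves, and flipping the orientation along the $z_2$-plane exchanges the period $\beta$ for $\bar\beta$, this identifies $J$ with $I_{\alpha\bar\beta}$.

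The main obstacle is finding the explicit ansatz for $\Omega_+$: the integrability, $g$-isotropy, and Joyce-positivity conditions together impose a nontrivial pointwise system on the coefficients, and it is not obvious a priori that a simultaneously consistent solution exists. Once a candidate is identified (most efficiently by working backwards from the requirement that $-\pi^{1,1}_{I_{\alpha\beta}} \Im\Omega_+$ be a positive diagonal $(1,1)$-form matching $\omega_g$), all the remaining verifications are mechanical computations.
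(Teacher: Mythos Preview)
Your proposal is correct and follows essentially the same approach as the paper: define $J$ on the open dense torus orbit via a decomposable $(2,0)$-form $\Omega_+=\phi_1\wedge\phi_2$, verify integrability by Frobenius, $g$-compatibility by $g(\phi_i,\phi_j)=0$, apply Joyce's criterion via $\Re\Omega_+=\Re\Omega_-$ and positivity of $-\pi^{1,1}_{I_{\alpha\beta}}\Im\Omega_+$, then extend across the two elliptic curves and read off the biholomorphism type from the induced orientations. The only thing you leave implicit is the explicit formula $\Omega_+=(dw_1-\tfrac{a}{b}\,d\bar w_2)\wedge(\tfrac{b}{a}k\,d\bar w_1+(1-k)\,dw_2)$, which, as you correctly note, is most easily reverse-engineered from the requirement that the $(1,1)$-projection match the diagonal $\omega_g$.
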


\section{Pluriclosed flow of invariant metrics}

In this section we prove that generalized K\"ahler-Ricci flow with invariant initial data converges to the pluriclosed soliton.  We then collect the proof of Theorem \ref{t:mainthm1} from this and the results above.

\begin{prop} \label{p:flowconv} Let $(M^4, I_{\ga\gb})$ be a diagonal Hopf surface, and suppose $g$ is an invariant pluriclosed metric of the form
		\begin{equation}\label{eq:aux:gz}
		g_{(z_1,z_2)}=
		\left[
		\begin{matrix}
		\cfrac{b^2}{a^2}\cdot\cfrac{k}{|z_1|^2} & 0\\
		0 & \cfrac{1-k}{|z_2|^2}
		\end{matrix}
		\right].
		\end{equation}
	 The solution $\gw_t$ to pluriclosed flow with this initial data remains invariant, exists for $[0,\infty)$, and converges in $C^{\infty}$ to $\gw_{\ga\gb}$, the unique invariant soliton of Proposition \ref{p:existence}.
\begin{proof}
	Let $k_0(x)\colon \R\to (0,1)$ by any smooth function such that~\eqref{eq:aux:gz} defines a smooth metric on Hopf surface $M$. Denote by $\kappa(x)$ the function corresponding to the soliton metric of Proposition~\ref{p:existence} characterized by ODE
	\[
	\kappa'_x=\kappa(1-\kappa)
	\left(
		\frac{b-a}{b}\kappa+\frac{a}{b}
	\right).
	\]
	Let $g(t)$ be the solution to the pluriclosed flow on the maximal time interval $[0,T_{\max})$ and denote by $k=k(x,t)$ the corresponding solution to:
	\begin{equation} \label{f:kflow}
	\begin{cases}
	k'_t= \left(\frac{k'_x}{k(1-k)}\right)'_x\\
	k(x,0)=k_0(x)
	\end{cases}
	\end{equation}
	on $\R_x\times[0,T_{\max})_t$.  As the pluriclosed flow preserves invariance under Killing fields it follows that the metric $g(t)$ is described by the ansatz (\ref{eq:aux:gz}), where $k$ is determined by (\ref{f:kflow}).
	\begin{lemma} \label{l:kestimate}
		There exists a constant $C>0$ such that for any $t\in[0,T_{\max})$
		\[
		\kappa(x-C)<k(x-\mu t,t)<\kappa(x+C),
		\]
		where $\mu=\frac{b-a}{b}$.
	\begin{proof}
		The asymptotics of $\kappa$ and $k(x,0)$ as $x\to \pm \infty$ imply that for some constant $C>0$
		\[
		\kappa(x-C)<k(x,0)<\kappa(x+C).
		\]
		We will prove that the same constant $C$ works for any $t>0$.

		Let 
		\[
		s(x,t):=\kappa^{-1}(k(x-\mu t,t)).
		\]
		Then the statement of the lemma is equivalent to the bound $|s(x,t)-x|<C$ on $\R_x\times [0,T_{\max})$. Let us derive the evolution equation for $s(x,t)$. We directly compute using the differential equations satisfied by $k(x)$ and $\kappa(s)$:
		\begin{equation}
		\begin{split}
		s'_x=&\ \frac{1}{\kappa'_s}k'_x\\
		s'_t=&\ \frac{1}{\kappa'_s}(k'_t-\mu k'_x)=
		\frac{1}{\kappa'_s}
		\left(
			\Bigl(\frac{k'_x}{k(1-k)}\Bigr)'_x-\mu k'_x
		\right)\\
		=&\ \frac{1}{\kappa'_s}
		\left(
		(s'_x (\mu k+a/b))'_x-\mu k'_x
		\right)\\
		=&\ \frac{s''_x}{k(1-k)}+\mu (s'_x)^2-\mu s'_x.
		\end{split}
		\end{equation}
		
		Therefore the function $\Phi(x,t):=s(x,t)-x$ satisfies
		\begin{equation}\label{eq:Phi}
		\Phi'_t=\frac{\Phi''_x}{k(1-k)}+\mu \Phi'_x (\Phi'_x+1).
		\end{equation}
		To obtain an upper bound on $|\Phi(x,t)|$ it remains to prove that equation~\eqref{eq:Phi} for a function on a non-compact domain satisfies a parabolic maximum principle. Equation~\eqref{eq:Phi} is equivalent to an equation
		\[
		\Phi'_t=\Delta^C \Phi+\mu \nabla_Y\Phi(\nabla_Y\Phi-1)
		\]
		for $\Phi(x,t)$, $x=\frac{b}{a}\log |z_1|^2-\log |z_2|^2$ on $(\C^*)^2_{(z_1,z_2)}\times [0,T_{\max})$, where $\nabla$ is the Chern connection, $\Delta^C=g^{i\bar j}\nabla_i\nabla_{\bar j}$ is the Chern Laplacian, and $\mu Y$ is the soliton vector field.  Furthermore, smooth dependence of $g(t)$ on $t$ and asymptotics of $k(x,t)$ and $\kappa(x)$ at $\pm\infty$ ensure that $\Phi(x,t)$, $x=\frac{b}{a}\log |z_1|^2-\log |z_2|^2$ on $(\C^*)_{(z_1,z_2)}^2\times [0,T_{\max})$ extends to a $C^2$ function on $(\C^2\backslash\{(0,0)\})\times [0,T_{\max})$.  Thus $\Phi(x,t)$, $x=\frac{b}{a}\log |z_1|^2-\log |z_2|^2$ solves a parabolic PDE on $(\C^2\backslash\{(0,0)\})$ and being invariant descends to a PDE on the Hopf surface $M=(\C^2\backslash\{(0,0)\})/\Gamma$. Hence we can apply standard parabolic maximum principle and conclude that
		\[
		-C<\Phi(x,t)<C
		\]
		on $\R_x\times [0,T_{\max})$. This proves the lemma.
	\end{proof}
	\end{lemma}
	
	Lemma \ref{l:kestimate} implies that, after pulling back by the appropriate time-dependent translation in the $x$ variable, the function $k$ has uniform bounds relative to $\kappa$, which easily implies that the resulting metrics $g(t)$ are uniformly equivalent to the soliton metric.  Also one can obtain a torsion potential along the flow, which is given by
	\begin{align*}
	\gb = (k(x,t)-\kappa(x))du_1\wedge du_2 \in \Lambda^{2,0},\quad x=u_1+\bar{u_1}
	\end{align*}
	in $(u_1,u_2)$ coordinates, and extends smoothly to the whole Hopf surface. This torsion potential is characterized by the identity
	\begin{align*}
	\delb \gb = \del \gw_t - \del \gw_{\ga\gb}.
	\end{align*}
	From the uniform bound on $k$ it follows that $\gb$ is also uniformly bounded.  It then follows from the general regularity results for pluriclosed flow (cf. \cite{StreetsPCFBI} Theorem 1.7, Proposition 5.13) that the metric $g(t)$ has uniform $C^{\infty}$ bounds for all time.  Using the gradient flow property of pluriclosed flow (\cite{PCFReg}), a standard argument shows that any sequence contains a subsequence which converges to a steady soliton, which by uniqueness within the symmetry class must be $\gw_{\ga\gb}$.  The result follows.
\end{proof}
\end{prop}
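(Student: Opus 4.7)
The plan is to reduce the flow to a scalar parabolic equation for the profile function $k(x,t)$, compare it with the soliton profile $\kappa$ via a shift in the $x$ variable, and bootstrap a $C^0$ comparison to $C^\infty$ convergence using known regularity for pluriclosed flow.

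First, I would show that the ansatz is preserved. Invariance under the torus action together with the Killing field $Z$ is preserved along pluriclosed flow, so the evolved metric retains the form (\ref{eq:aux:gz}). Using Lemma~\ref{l:Bismutcurvature} with $m\equiv 0$, $n\equiv k$ so that $V=k(1-k)$, the equation $\partial_t\gw=-\rho_B^{1,1}$ reduces to the scalar equation $k_t=(k_x/(k(1-k)))_x$ on $\R_x\times[0,T_{\max})$, with initial data $k_0$ whose asymptotics at $\pm\infty$ match those of the soliton profile $\kappa$ up to a shift.

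The heart of the proof is a two-sided comparison with a time-translated soliton. Because the soliton drifts by the vector field $\mu Y$ with $Y=\partial_{\Re(u_1)}$, the natural quantity is $s(x,t):=\kappa^{-1}(k(x-\mu t,t))$ and $\Phi(x,t):=s(x,t)-x$. A short computation, using the ODE for $\kappa$ and the PDE for $k$, shows that $\Phi$ satisfies a quasilinear parabolic equation of the form
\[
\Phi_t=\frac{\Phi_{xx}}{k(1-k)}+\mu\Phi_x(\Phi_x+1).
\]
From the matching asymptotics of $k_0$ and $\kappa$ at $\pm\infty$, one obtains $|\Phi(\cdot,0)|<C$. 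I would then apply a maximum principle to conclude $|\Phi|<C$ for all time.

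The subtle step, and in my view the main obstacle, is justifying the maximum principle despite $x$ ranging over the noncompact $\R$. The right move is to reinterpret the equation: since $\Phi$ depends only on $x=\frac{b}{a}\log|z_1|^2-\log|z_2|^2$ and the metric/connection are invariant, the equation for $\Phi$ becomes a parabolic equation on the Hopf surface of the form $\Phi_t=\Delta^C\Phi+\mu\nabla_Y\Phi(\nabla_Y\Phi-1)$, and the asymptotics of $k$ and $\kappa$ at $x=\pm\infty$ ensure that $\Phi$ extends $C^2$ across the two elliptic curves $\{z_1=0\}$, $\{z_2=0\}$. On the resulting compact manifold the maximum principle applies verbatim.

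Finally, from $|\Phi|<C$ we get a uniform two-sided bound $\kappa(x-C)<k(x-\mu t,t)<\kappa(x+C)$, whence the metrics $g(t)$ are uniformly equivalent to $g_{\ga\gb}$ after the drift translation. A torsion potential $\gb=(k(x,t)-\kappa(x))\,du_1\wedge du_2$ is uniformly bounded and satisfies $\bar\partial\gb=\partial\gw_t-\partial\gw_{\ga\gb}$, which feeds into the Streets regularity theorem for pluriclosed flow (\cite{StreetsPCFBI}) to upgrade the $C^0$ bounds to uniform $C^\infty$ bounds and to deduce long-time existence. Subsequential convergence then follows from standard compactness, and the gradient flow property of pluriclosed flow \cite{PCFReg} forces any limit to be a steady soliton; since the soliton of Proposition~\ref{p:existence} is unique within the symmetry class, the full flow converges to $\gw_{\ga\gb}$.
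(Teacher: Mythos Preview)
Your proposal is correct and follows essentially the same route as the paper: the reduction to the scalar PDE for $k$, the change of variable $s=\kappa^{-1}(k(x-\mu t,t))$ and $\Phi=s-x$, the derivation of $\Phi_t=\Phi_{xx}/(k(1-k))+\mu\Phi_x(\Phi_x+1)$, the reinterpretation of this equation on the compact Hopf surface to justify the maximum principle, and the concluding use of the torsion potential plus the regularity/gradient-flow package all match the paper's argument step for step.
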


\begin{proof}[Proof of Theorem \ref{t:mainthm1}] The claimed existence was shown in Proposition \ref{p:existence}.  The compatible odd and even type generalized K\"ahler structures are constructed in \S \ref{s:compGK}.  It follows from the discussion of odd-type generalized K\"ahler structures in \S \ref{ss:oddGK} that any invariant metric as described in (\ref{f:invmetrics_z}) is in fact of the form (\ref{eq:aux:gz}).  Thus the global existence and convergence of generalized K\"ahler-Ricci flow with this initial data is shown in Proposition \ref{p:flowconv}.
\end{proof}

\section{Uniqueness for odd-type generalized K\"ahler-Ricci solitons}

To address the uniqueness question in the odd-type generalized K\"ahler setting, we first introduce a certain cohomology space associated to generalized K\"ahler geometry in the case $[I,J] = 0$.  Let $(M^{2n}, g, I, J)$ be a generalized K\"ahler manifold with $[I,J] = 0$, and let $\Pi = IJ$ as above.  Given $\phi_I \in \Lambda^{1,1}_{I,\mathbb R}$, let 
\begin{align*}
\phi_J = - \phi_I(\Pi \cdot, \cdot) \in \Lambda^{1,1}_{J,\mathbb R}.
\end{align*}
We will say that $\phi_I$ is \emph{formally generalized K\"ahler} if
\begin{gather*}
\begin{split}
d^c_{I} \phi_I =&\ - d^c_{J} \phi_J,\\
d d^c_{I} \phi_I =&\ 0.
\end{split}
\end{gather*}
These are precisely the conditions satisfied by the K\"ahler form $\gw^I$ associated to a generalized K\"ahler metric, ignoring positivity.

As the $\pm 1$-eigenspace splitting for $\Pi$ is holomorphic, we obtain a fourfold splitting of the $d$ operator as
\begin{align*}
d = \del_+ + \delb_+ + \del_- + \delb_-.
\end{align*}
A key point is that for a smooth function $f$, the tensor
\begin{align} \label{f:squaredef}
\square f := \i \left( \del_+ \delb_+ - \del_- \delb_- \right) f
\end{align}
is formally generalized K\"ahler.  Using these facts we can define cohomology classes for formally generalized K\"ahler structures.

\begin{defn} \label{d:cone} Let $(M^{2n}, g, I, J)$ be a generalized K\"ahler manifold
such that $[I,J] = 0$.  Let
\begin{align*}
 \mathcal H := \frac{ \left\{ \phi_I \in \Lambda^{1,1}_{I, \mathbb R}\ |  \
\phi_I
\mbox{ is formally generalized K\"ahler} \right\}}{ \left\{ \i \left( \del_+ \delb_+ - \del_- \delb_- \right) f \ |\ f \in C^{\infty}(M,\R) \right\}}.
\end{align*}
Given $\phi$ formally generalized K\"ahler we denote its cohomology class in $\HH$ by $[\phi]$.
\end{defn}

In analogy with the Calabi-Yau theorem we can only expect uniqueness of our solitons to hold within a fixed cohomology class, and moreover with respect to a fixed choice of holomorphic vector field.  The next proposition establishes this fact in broad generality.

\begin{prop} \label{p:uniqueness} Let $(M^{2n}, g_{i}, I, J)$, $i=1,2$ be generalized K\"ahler-Ricci solitons with respect to a vector field $X$, and further assume $[I,J] = 0$ and $[\gw^I_1] = [\gw^I_2]$.  Then $\gw^I_1 = \gw^I_2$.
\begin{proof} Since $[\gw^I_1] = [\gw^I_2]$ there exists a smooth function $u \in C^{\infty}(M)$ such that
\begin{align*}
\gw^I_2 = \gw^I_1 + \square u.
\end{align*}
As both $\gw^I_i$ satisfy $\rho_B^{1,1}(\gw^I_i) = L_X \gw^I_i$, we directly obtain using the transgression formula for $\rho_B^{1,1}$ for generalized K\"ahler structures (cf. \cite{StreetsCGKFlow} Proposition 3.2),
\begin{align*}
0 =&\ \rho_B^{1,1}(\gw^I_2) - \rho_B^{1,1}(\gw^I_1) - L_X \left( \gw^I_2 - \gw^I_2 \right)\\
=&\ - \square \log \frac{ (\gw^I_2)_+^k \wedge (\gw^I_1)_-^l}{ (\gw^I_1)_+^k \wedge (\gw^I_2)_-^l} - L_X \square u.
\end{align*}
To simplify notation we set $\gw = \gw^I_1,\ \gw_u = \gw^I_1 + \square u$.  Since the vector field $X$ is holomorphic with respect to both $I$ and $J$, it follows that $L_X$ commutes with $I$, $J$, and hence the projection operator $Q$.  It follows that $L_X$ commutes with $\square$, and thus, plugging this fact in above we obtain
\begin{align*}
0 = \square \left( \log \frac{ (\gw^I_2)_+^k \wedge (\gw^I_1)_-^l}{ (\gw^I_1)_+^k \wedge (\gw^I_2)_-^l} + X u \right).
\end{align*}
We note that for functions $f$ satisfying $\square f = 0$ it follows that
\begin{align*}
0 = \left(\tr_{\gw_+} - \tr_{\gw_-} \right) \square f = \tr_{\gw_+} \i \del_+ \delb_+ f + \tr_{\gw_-} \i \del_- \delb_- f = \gD f,
\end{align*}
where $\gD$ denotes the Laplacian with respect to the Chern connection of $\gw$.  By the maximum principle we conclude that the function $f$ must be constant.  Hence there is a constant $c$ such that
\begin{align*}
\log \frac{ (\gw^I_2)_+^k \wedge (\gw^I_1)_-^l}{ (\gw^I_1)_+^k \wedge (\gw^I_2)_-^l} + X u  + c = 0.
\end{align*}
We assume that $c \geq 0$, the case $c \leq 0$ being analogous.  In this case, rearranging the above equation will yield
\begin{align*}
\left(e^{X u} - 1 \right)& \left(\gw_+^k \wedge (\gw_u)_-^l \right)\\
\geq&\ (\gw_u)_+^k \wedge \gw_-^l - \gw_+^k \wedge (\gw_u)_-^l\\
=&\ \i \del_+ \delb_+ u \wedge \Bigl(\sum_{j=0}^{k-1} (\gw_u)_+^j \wedge \gw_+^{k-1-j} \Bigr) \wedge \gw_-^l + \i \del_- \delb_- u \wedge \gw_+^k \wedge \Bigl( \sum_{j=0}^{l-1} (\gw_u)_-^{j} \wedge \gw_-^{l-1-j} \Bigr).
\end{align*}
Dividing this inequality by the background volume form $\gw_+^k \wedge \gw_-^l$ shows that $u$ satisfies
\begin{align*}
\tr_{\til{\gw}} \i \del \delb u + (f X) u \leq 0,
\end{align*}
for some positive definite Hermitian metric $\til{\gw}$ and smooth function $f$.  The strong minimum principle argument will yield that $u$ must be constant.  The case $c \leq 0$ follows in a similar way.
\end{proof}
\end{prop}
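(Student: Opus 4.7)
The plan is to adapt a Calabi-type uniqueness argument to the generalized K\"ahler setting, with the $\square$ operator of \eqref{f:squaredef} playing the role of $\i\del\delb$. Since $[\gw^I_1] = [\gw^I_2] \in \HH$, Definition~\ref{d:cone} lets me write $\gw^I_2 = \gw^I_1 + \square u$ for some $u \in C^{\infty}(M, \R)$, so the goal reduces to showing $u$ is constant.

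Next I would subtract the two soliton equations $\rho_B^{1,1}(\gw^I_i) = L_X \gw^I_i$. The essential input is a transgression formula for $\rho_B^{1,1}$ under $\square$-perturbations, available from earlier work on generalized K\"ahler-Ricci flow (cf.\ \cite{StreetsCGKFlow}), which expresses $\rho_B^{1,1}(\gw^I_2) - \rho_B^{1,1}(\gw^I_1)$ as $-\square$ of the logarithm of a ratio of mixed wedge products of $(\gw^I_i)_\pm$. Because $X$ is holomorphic with respect to both $I$ and $J$, $L_X$ commutes with the $\Pi$-eigenspace projectors, and therefore with $\square$. Thus the difference of the two soliton equations collapses to $\square F = 0$, where $F$ is the log ratio plus $Xu$.

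To promote this to a pointwise statement I would use that $(\tr_{\gw_+} - \tr_{\gw_-}) \square f = \gD f$ for every smooth $f$, where $\gD$ is the Chern Laplacian of $\gw^I_1$. Hence $\square F = 0$ implies $\gD F = 0$ on the compact manifold, forcing $F \equiv c$ by the maximum principle. This yields a first-order differential relation between $u$ and the mixed volume ratio.

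The hard part is converting this first-order relation into a scalar elliptic inequality for $u$. Splitting on the sign of $c$, I would expand the difference of mixed wedge products using $\gw^I_2 = \gw^I_1 + \square u$ to obtain a linear combination of $\i\del_+\delb_+ u$ and $\i\del_-\delb_- u$ wedged against positive forms built from $\gw^I_1$ and $\gw^I_2$, and pick up a sign-correct multiplier of the form $e^{Xu}-1$. Dividing by a background volume should produce an inequality of the shape $\tr_{\til\gw}\i\del\delb u + (fX) u \leq 0$ for some positive definite Hermitian $\til\gw$ and smooth $f$, to which the strong minimum principle applies to force $u$ constant, and hence $\gw^I_1 = \gw^I_2$. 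The main technical work will be checking positive definiteness of the resulting operator and tracking signs through the case split on $c$.
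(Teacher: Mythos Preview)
Your proposal is correct and follows essentially the same approach as the paper's proof: write $\gw^I_2 = \gw^I_1 + \square u$, subtract the soliton equations and apply the transgression formula from \cite{StreetsCGKFlow} together with $L_X \square = \square L_X$ to obtain $\square F = 0$, deduce $\gD F = 0$ via $(\tr_{\gw_+} - \tr_{\gw_-})\square$, split on the sign of the resulting constant, expand the mixed wedge products telescopically, and conclude with the strong minimum principle applied to an inequality of the form $\tr_{\til\gw}\i\del\delb u + (fX)u \leq 0$. The only details you flag as ``the main technical work'' are exactly the ones the paper handles in its final displayed inequality.
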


To apply Proposition \ref{p:uniqueness} and establish the uniqueness of odd-type generalized K\"ahler-Ricci solitons on Hopf surfaces we first compute the cohomology space $\mathcal H$.
\begin{lemma}\label{lm:cohomology}
	Let $(M^4,g,I_{\alpha\beta},J_{\alpha\beta})$ be an odd-type generalized K\"ahler structure on a diagonal Hopf surface. Then $\dim_\R \mathcal H=2$.
	\begin{proof}
		There is a natural surjective map
		\[
		\mathrm{pr}\colon \mathcal H\to H^{1,1}_{\del+\delb}(M,I)
		:=
		\cfrac
			{\left\{\eta\in \Lambda^{1,1}_{I,\R}\ |\ \del\delb\eta=0 \right\}} 
			{\left\{\delb{\gamma}+\del\bar\gamma\ |\ \gamma\in\Lambda^{1,0}_I\right\}}
		\]
		onto the real (1,1)-Aeppli cohomology of $(M,I)$. This map is well defined, since the equivalence relation of Definition~\ref{d:cone} can be expressed as
		\[
		\left\{
			\delb\gamma+\del\bar{\gamma}\ \big|\ 
			\gamma=\frac{\sqrt{-1}}{2}(\del_+-\del_-)f
		\right\}.
		\]
		The space of real (1,1)-Aeppli cohomology is naturally dual to the space of real (1,1)-Bott-Chern cohomology, and the latter is known to be one-dimensional, see~\cite{Telemancone}. Hence, it remains to prove that $\ker(\mathrm{pr})$ has real dimension 1.
		
		Let $\eta=\delb\gamma+\del\bar{\gamma}$ be a form representing the zero class in $H^{1,1}_{\pd+\bar{\pd}}(M,I)$, where $\gamma=u dz_1+v dz_2$.  The form $\eta$ is formally generalized K\"ahler if and only if $u=u(z_1,z_2,\bar z_1, \bar z_2)$ and $v=v(z_1,z_2,\bar z_1, \bar z_2)$ satisfy
		\begin{equation}\label{eq:cohomology_gk}
		\frac{\del}{\del {z_2}}\bar u=
		\frac{\del}{\del\bar{z_1}} v.
		\end{equation}
		Given $\gamma$ as above, we can define a form
		\[
		\gamma^*=\bar u d\bar{z_1} + vdz_2 \in \Lambda^{0,1}_J
		\]
		which is $\delb_J$-closed if and only if $\gamma$ satisfies~\eqref{eq:cohomology_gk}. In this case $\gamma^*$ defines a class in the Dolbeault cohomology group $H_{\delb}^{0,1}(M,J)\simeq \C$. Now we construct a specific generator of $H_{\delb}^{0,1}(M,J)$.
		
		Let $\Phi(z_1,z_2)$ be a function on $\C^2\backslash\{(0,0)\}$ defined by the identity 
		\[
		|z_1|^2\Phi^{-\frac{2a}{a+b}}+|z_2|^2\Phi^{-\frac{2b}{a+b}}=1.
		\]
		See~\cite[\S 5]{XYang-17} for the proof that this identity yields a well-defined smooth function. Function $\Phi$ is automorphic under the action of the deck transformation, hence $d\log\Phi$ is a well-defined 1-form on $M\simeq S^3\times S^1$ generating $H^1(M,\R)$. This implies that
		\[
		\nu=\delb_J \log \Phi,
		\]
		can be taken as a generator of $H_{\delb}^{0,1}(M,J)$ (\cite[Lemma\,2.3]{Telemancone}). Therefore, the form $\gamma^*$ above can be written as
		\[
		\gamma^*=(c+d\sqrt{-1})\delb_J\log\Phi+\delb_J(\phi+\sqrt{-1}\psi),\quad c,d\in\R;\ \phi,\psi\in C^\infty(M,\R).
		\]
		Directly computing $\eta=\delb\gamma+\del\bar{\gamma}$ in terms of $\gamma^*$, we find:
		\[
		\eta=2d \square \log\Phi+2\square\psi.
		\]
		Thus form $\eta$ represents class $2d[\square\log \Phi]$ in $\mathcal H$. This proves that the kernel of the projection $\mathrm{pr}$ is at most one-dimensional. 
		
		We claim that $[\square\log\Phi]\neq 0$ in $\mathcal H$. Indeed, denote $\omega_I=\sqrt{-1}(\omega_1 dz_1\wedge d\bar{z_1}+\omega_2 dz_2\wedge d\bar{z_2})$ and let $\til\omega_I:=\sqrt{-1}(\omega_1 dz_1\wedge d\bar{z_1}-\omega_2 dz_2\wedge d\bar{z_2})$. Since $\omega_I$ and $\til\omega_I$ are diagonal, and $\omega_I$ is pluriclosed, we conclude that for any $u\in C^\infty(M,\R)$
		\[
		\int_M \til\omega_I\wedge \square u=\int_M\omega_I\wedge \sqrt{-1}\del\delb u=\int_M\delb\del\omega_I\sqrt{-1} u=0.
		\]
		On the other hand, by a direct computation (see~\cite[Lemma\,5.7]{XYang-17}) form $\sqrt{-1}\del\delb \log \Phi$ is nonzero and semipositive. Therefore
		\[
		\int_M \til\omega_I\wedge \square \log\Phi=\int_M\omega_I\wedge \sqrt{-1}\del\delb \log \Phi>0.
		\]
		This proves that $\square\log\Phi$ is a nontrivial element in $\mathcal H$.
	\end{proof}
\end{lemma}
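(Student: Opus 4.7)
The plan is to realize $\mathcal H$ as a two-step extension of cohomology groups: a projection onto the real Aeppli cohomology of $(M,I)$, whose image I can compute using known results for class $\mathrm{VII}$ surfaces, and a kernel which I aim to identify with a Dolbeault cohomology of $(M,J)$. Each should turn out to be one-dimensional over $\R$, so $\dim_\R \mathcal H = 2$.

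First I would construct a natural projection $\mathrm{pr}\colon \mathcal H \to H^{1,1}_{\del+\delb}(M,I)$ onto the real Aeppli cohomology. Every formally generalized K\"ahler form $\phi$ satisfies $\del\delb\phi = 0$ and hence represents an Aeppli class. To check that the projection descends to the quotient, I need forms of type $\square f$ to be Aeppli-exact; taking $\gamma := \tfrac{\i}{2}(\del_+ - \del_-) f$, a direct computation gives $\square f = \delb\gamma + \del\bar\gamma$. By Teleman's result on class $\mathrm{VII}$ surfaces, the real Bott-Chern cohomology is one-dimensional, hence by the standard duality so is Aeppli. Surjectivity of $\mathrm{pr}$ follows from the existence of the K\"ahler form $\gw^I$ itself, whose Aeppli class one checks to be nonzero by pairing against a suitable closed $(1,1)$-form.

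Second, I would analyze $\ker(\mathrm{pr})$. A kernel element is represented by $\eta = \delb\gamma + \del\bar\gamma$ with $\gamma = u\, dz_1 + v\, dz_2 \in \Lambda^{1,0}_I$. Since $\Pi = IJ$ acts as $+1$ on the $z_1$-directions and $-1$ on the $z_2$-directions on diagonal Hopf surfaces, the formally GK condition on $\eta$ should unpack into a single first-order PDE coupling $u$ and $v$, and I expect this PDE to be precisely $\delb_J \gamma^* = 0$ for the $(0,1)_J$-form $\gamma^* := \bar u\, d\bar z_1 + v\, dz_2$. Modifying $\gamma^* \mapsto \gamma^* + \delb_J h$ should adjust $\eta$ by a form of type $\square f$, giving an injection $\ker(\mathrm{pr}) \hookrightarrow H^{0,1}_{\delb}(M,J) \simeq \C$, and after accounting for the reality constraint on $\mathcal H$ this yields a real one-dimensional upper bound.

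To finish, I would exhibit a nonzero class in $\ker(\mathrm{pr})$ using Yang's implicit potential $\Phi$, defined on the universal cover by $|z_1|^2 \Phi^{-2a/(a+b)} + |z_2|^2 \Phi^{-2b/(a+b)} = 1$. The form $\square \log \Phi$ is formally GK by construction; to test nontriviality I would pair against the closed $(1,1)$-form $\til\gw_I$ obtained from $\gw^I$ by flipping the sign of its $dz_2 \wedge d\bar z_2$-component. Integration by parts, using that $\gw^I$ is pluriclosed, converts $\int_M \til\gw_I \wedge \square \log \Phi$ into $\int_M \gw^I \wedge \i\del\delb \log \Phi$, and Yang showed the latter integrand is nontrivial and semipositive, forcing strict positivity. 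The main obstacle is the identification in the second step: one must carefully match the formally GK condition on $\delb\gamma + \del\bar\gamma$ with $\delb_J$-closedness of $\gamma^*$, and show that $\delb_J$-exact $\gamma^*$ correspond exactly to $\eta$ lying in the denominator of $\mathcal H$, so that the map $\ker(\mathrm{pr}) \hookrightarrow H^{0,1}_{\delb}(M,J)$ is well-defined and injective.
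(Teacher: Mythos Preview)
Your proposal is correct and follows essentially the same route as the paper: project to real Aeppli cohomology (one-dimensional by Teleman), identify the kernel via the $(0,1)_J$-form $\gamma^* = \bar u\,d\bar z_1 + v\,dz_2$ and the Dolbeault group $H^{0,1}_{\delb}(M,J)$, and detect nontriviality of $[\square\log\Phi]$ by pairing against $\til\omega_I$ and invoking Yang's semipositivity of $\i\del\delb\log\Phi$. The only stylistic difference is that the paper does not set up a well-defined injection $\ker(\mathrm{pr})\hookrightarrow H^{0,1}_{\delb}(M,J)$; instead it writes an arbitrary $\gamma^*$ as $(c+d\i)\,\delb_J\log\Phi + \delb_J(\phi+\i\psi)$ and computes directly that $\eta = 2d\,\square\log\Phi + 2\,\square\psi$, which immediately gives $\ker(\mathrm{pr})\subseteq\R[\square\log\Phi]$ without ever needing the map $\eta\mapsto[\gamma^*]$ to be well-defined---this neatly sidesteps the obstacle you flagged.
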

		
	\begin{prop}\label{p:uniqueness_hopf} Let $(M^{4}, g, I_{\alpha\beta}, J_{\alpha\beta})$  be an odd-type generalized K\"ahler-Ricci soliton on a diagonal Hopf surface $M$ with respect to a vector field $X$.
	Then 
	\begin{enumerate}
		\item $g$ is $T^3$-invariant;
		\item after an appropriate rescaling of $(g,X)$:
		\begin{enumerate}
			\item $g = \phi_{t_0}^*g_{\ga\gb}$, where $g_{\ga\gb}$ is the invariant soliton constructed in Proposition~\ref{p:existence}, and $\phi_t\colon M\to M$ is the one-parameter family of diffeomorphisms generated by $Y=\pd_{\Re(u_1)}$;
			\item $X-X_S$ is a Killing vector field, where $X_S=\mu Y$ is the drift vector field of $g_{\ga\gb}$.
		\end{enumerate}
	\end{enumerate}
	\begin{proof}
		Let us first prove that the metric $g$ is $T^3$-invariant. For any $\gamma\in T^3$ consider two generalized K\"ahler solitons:
		\[
		(M,g,I,J),\quad (M, \gamma^*g, I,J).
		\]
		To apply Proposition~\ref{p:uniqueness} we need to check that:
		\begin{itemize}
			\item [(i)] $\gamma_* X=X$;
			\item [(ii)] $[\omega_I]=[\gamma^*\omega_I]$ in $\mathcal H$.
		\end{itemize}
		Using the description of the automorphism groups of primary Hopf surfaces~\cite{NambaHopfAut}, it is easy to see that the automorphism group of $(M,I,J)$ is $G=(\C^*)^2_{z_1,z_2}/\Gamma\simeq T^3\times \R$. Since the vector field $X$ is known to preserve both $I$ and $J$, and $G$ is commutative, we conclude that $\gamma_*X=X$ for any $\gamma\in G$. This proves (i).
		
		To prove (ii) we show that $T^3$ acts trivially on the cohomology space $\mathcal H\simeq \R^2$. Indeed, there is a proper, non-trivial positive cone in $\mathcal H$:
		\[
		\mathcal H_+:=\{[\omega]\in\mathcal H\ |\ \int_{\mathcal{E}_i}\omega>0\},
		\]
		where $\mathcal E_i=\{z_i=0\}, i=1,2$ are the two elliptic curves in $M$. The group $T^3$ acts on $\mathcal H\simeq \R^2$ preserving this positive cone $\mathcal H_+\subset \mathcal H$. Since it is a proper cone in a real two-dimensional space by Lemma \ref{lm:cohomology}, the action must be trivial. Claim (ii) is proved.  Having established (i) and (ii) we are now ready to apply Proposition~\ref{p:uniqueness} to conclude that $\gamma^*g=g$. This proves part (1) of the theorem.
		
		To prove part (2) we observe that $g$, being $T^3$-invariant, (possibly after rescaling by $C>0$) must be of the form~\eqref{f:invmetrics_z}. Furthermore, the vector field $X$ is in the Lie algebra of $G\simeq T^3\times \R$ and must be of the form $X^T+\lambda Y$, where $X^T$ is a Killing vector field in the Lie algebra of $T^3$ and $Y=\pd_{\Re(u_1)}$ is the vector field introduced in the proof of Proposition~\ref{p:existence}. Construction of Proposition~\ref{p:existence} provides an invariant pluriclosed soliton with respect to vector fields $\lambda Y, \lambda\in \R$, and this soliton is unique up to a constant of integration. Different choices of the constant result into shifts in $x=u_1+\bar{u_1}$ coordinate, which is equivalent to pulling back the metric by a one-parameter family of diffeomorphisms $\phi_t^*$ generated by $Y$. Therefore $g=\phi_{t_0}^*g_{\ga\gb}$ for some $t_0$. This proves~(2).
	\end{proof}
	\end{prop}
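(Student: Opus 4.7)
The plan is to use Proposition~\ref{p:uniqueness} as the main rigidity lever. For any $\gamma \in T^3 \subset \mathrm{Aut}(M,I,J)$, I would compare the two generalized K\"ahler-Ricci solitons $(M, g, I, J)$ and $(M, \gamma^* g, I, J)$. To apply Proposition~\ref{p:uniqueness} I need to verify (i) both solitons share the same holomorphic vector field, i.e.\ $\gamma_* X = X$, and (ii) $[\gamma^* \omega_I] = [\omega_I]$ in $\mathcal H$. For (i), since $X$ is holomorphic with respect to both $I$ and $J$ it lies in the Lie algebra of the automorphism group of $(M,I,J)$; on a primary diagonal Hopf surface this group is the abelian group $G \simeq T^3 \times \R$, so any $\gamma \in T^3 \subset G$ centralizes $X$ and we get $\gamma_*X = X$.

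For (ii) I would exploit Lemma~\ref{lm:cohomology}, which identifies $\mathcal H$ with $\R^2$. The $T^3$-action pulls back $\omega_I$ through a linear action on $\mathcal H$, and it preserves the cone
\[
\mathcal H_+ = \left\{[\eta] \in \mathcal H \ \Bigl|\ \int_{\mathcal E_i} \eta > 0,\ i = 1,2\right\},
\]
where $\mathcal E_1, \mathcal E_2$ are the two invariant elliptic curves $\{z_i=0\}$. Because $T^3$ is connected, compact, and acts linearly on $\R^2$ preserving a proper open cone, the action must be trivial. (Any nontrivial compact linear subgroup of $GL(2,\R)$ is conjugate to a subgroup of $SO(2)$, which does not preserve a proper cone.) Thus $[\gamma^*\omega_I] = [\omega_I]$, and Proposition~\ref{p:uniqueness} yields $\gamma^* g = g$ for all $\gamma \in T^3$, proving part (1).

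For part (2), with $T^3$-invariance in hand, $g$ (after rescaling) is automatically of the diagonal ansatz~\eqref{f:invmetrics_z}, and by the odd-type structural arguments of \S\ref{ss:oddGK} it is in fact of the reduced form~\eqref{eq:aux:gz} in which only $k = k(x)$ is unknown. The vector field $X \in \mathrm{Lie}(G) = \mathrm{Lie}(T^3) \oplus \R \cdot Y$ decomposes as $X = X^T + \lambda Y$ with $X^T$ Killing, so $L_{X^T}\omega_I = 0$ and the soliton equation collapses to $(\rho_B^{1,1})_I = \lambda\, L_Y \omega_I$. This is precisely the ODE system solved in the proof of Proposition~\ref{p:existence}, whose solution is uniquely determined up to the single integration constant in~\eqref{eq:final}; that constant corresponds exactly to translation in $x = u_1 + \bar u_1$, i.e.\ to pulling back by some $\phi_{t_0}$ in the flow of $Y$. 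Hence $g = \phi_{t_0}^* g_{\alpha\beta}$ and $X - X_S = X^T + (\lambda - \mu) Y$ is Killing (after the matching rescaling that forces $\lambda = \mu$), giving (2).

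The principal obstacle is step (ii): controlling the $T^3$-action on $\mathcal H$. Here the combination of the cohomological computation $\dim_\R \mathcal H = 2$ from Lemma~\ref{lm:cohomology} with the geometric input that $T^3$ preserves a proper positive cone defined by periods over the elliptic curves is essential --- without the $2$-dimensionality the cone argument would not suffice, and without the cone the action could a priori be a nontrivial rotation. Once this is in place, the rest of the argument is a direct application of the uniqueness proposition and a bookkeeping reduction to the ODE of Proposition~\ref{p:existence}.
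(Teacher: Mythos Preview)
Your proposal is correct and follows essentially the same route as the paper: you invoke Proposition~\ref{p:uniqueness} by checking (i) $\gamma_*X=X$ via the abelian automorphism group $G\simeq T^3\times\R$ and (ii) triviality of the $T^3$-action on $\mathcal H\simeq\R^2$ via the invariant proper cone of periods over the two elliptic curves, then reduce part (2) to the ODE analysis of Proposition~\ref{p:existence}. The only minor imprecision is your parenthetical ``after the matching rescaling that forces $\lambda=\mu$'': the rescaling normalizes $p\equiv 1$, and then $\lambda=\mu$ is forced by the extension asymptotics in Proposition~\ref{p:extension_necessary} (equivalently, once $g=\phi_{t_0}^*g_{\alpha\beta}$ one has $L_{(\lambda-\mu)Y}\omega_I=0$ and $Y$ is not Killing).
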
	

\begin{proof}[Proof of Corollary \ref{c:cor1}] Given $(M^4, g, I, J)$ a compact generalized K\"ahler steady soliton, the triple $(M^4, g, I)$ is a pluriclosed soliton, thus it follows from (\cite{Streetssolitons} Theorem 1.1) that either $(M^4, I)$ is K\"ahler Calabi-Yau, or $(M^4, I)$ is biholomorphic to a minimal Hopf surface.  As discussed above, in either the even or odd-type case, the Hopf surface must be diagonal, and the complex structure $J$ is a priori biholomorphic to $I_{\ga \bgb}$, which is also easily checked by the construction itself.  The uniqueness claims in the odd-type case follow from Proposition~\ref{p:uniqueness_hopf}.
\end{proof}

\bibliography{SUCOGKRSCS}

\begin{thebibliography}{10}

\bibitem{AGG}
{\sc Apostolov, V., Gauduchon, P., and Grantcharov, G.}
\newblock Bi-{H}ermitian structures on complex surfaces.
\newblock {\em Proc. London Math. Soc. (3) 79}, 2 (1999), 414--428.

\bibitem{ApostolovGualtieri}
{\sc Apostolov, V., and Gualtieri, M.}
\newblock Generalized {K}\"ahler manifolds, commuting complex structures, and
  split tangent bundles.
\newblock {\em Comm. Math. Phys. 271}, 2 (2007), 561--575.

\bibitem{ASNDGKCY}
{\sc {Apostolov}, V., and {Streets}, J.}
\newblock {The nondegenerate generalized K\"ahler Calabi-Yau problem}.
\newblock {\em ArXiv e-prints\/} (Mar. 2017).

\bibitem{FujikiPontASD}
{\sc Fujiki, A., and Pontecorvo, M.}
\newblock Anti-self-dual bihermitian structures on {I}noue surfaces.
\newblock {\em J. Differential Geom. 85}, 1 (2010), 15--71.

\bibitem{FujikiPontecorvoNKT}
{\sc Fujiki, A., and Pontecorvo, M.}
\newblock Twistors and bi-{H}ermitian surfaces of non-{K}\"{a}hler type.
\newblock {\em SIGMA Symmetry Integrability Geom. Methods Appl. 10\/} (2014),
  Paper 042, 13.

\bibitem{GualtieriThesis}
{\sc Gualtieri, M.}
\newblock {\em Generalized complex geometry}.
\newblock PhD thesis, St. John's College, University of Oxford, 11 2003.

\bibitem{GualtieriGKG}
{\sc Gualtieri, M.}
\newblock Generalized {K}\"ahler geometry.
\newblock {\em Comm. Math. Phys. 331}, 1 (2014), 297--331.

\bibitem{LeBrun}
{\sc LeBrun, C.}
\newblock Anti-self-dual {H}ermitian metrics on blown-up {H}opf surfaces.
\newblock {\em Math. Ann. 289}, 3 (1991), 383--392.

\bibitem{NambaHopfAut}
{\sc Namba, M.}
\newblock Automorphism groups of {H}opf surfaces.
\newblock {\em T\^{o}hoku Math. J. (2) 26\/} (1974), 133--157.

\bibitem{PontecorvoCS}
{\sc Pontecorvo, M.}
\newblock Complex structures on {R}iemannian four-manifolds.
\newblock {\em Math. Ann. 309}, 1 (1997), 159--177.

\bibitem{StreetsPCFBI}
{\sc Streets, J.}
\newblock Pluriclosed flow, {B}orn-{I}nfeld geometry, and rigidity results for
  generalized {K}\"ahler manifolds.
\newblock {\em Comm. Partial Differential Equations 41}, 2 (2016), 318--374.

\bibitem{Streetssolitons}
{\sc {Streets}, J.}
\newblock {Classification of solitons for pluriclosed flow on complex
  surfaces}.
\newblock {\em ArXiv e-prints\/} (Feb. 2018).

\bibitem{StreetsCGKFlow}
{\sc Streets, J.}
\newblock Pluriclosed flow on generalized {K}\"ahler manifolds with split
  tangent bundle.
\newblock {\em J. Reine Angew. Math. 739\/} (2018), 241--276.

\bibitem{PCF}
{\sc Streets, J., and Tian, G.}
\newblock A parabolic flow of pluriclosed metrics.
\newblock {\em Int. Math. Res. Not. IMRN}, 16 (2010), 3101--3133.

\bibitem{GKRF}
{\sc Streets, J., and Tian, G.}
\newblock Generalized {K}\"ahler geometry and the pluriclosed flow.
\newblock {\em Nuclear Phys. B 858}, 2 (2012), 366--376.

\bibitem{PCFReg}
{\sc Streets, J., and Tian, G.}
\newblock Regularity results for pluriclosed flow.
\newblock {\em Geom. Topol. 17}, 4 (2013), 2389--2429.

\bibitem{Telemancone}
{\sc Teleman, A.}
\newblock The pseudo-effective cone of a non-{K}\"ahlerian surface and
  applications.
\newblock {\em Math. Ann. 335}, 4 (2006), 965--989.

\bibitem{XYang-17}
{\sc Yang, X.}
\newblock Big vector bundles and complex manifolds with semi-positive tangent
  bundles.
\newblock {\em Math. Ann. 367}, 1-2 (2017), 251--282.

\end{thebibliography}
\bibliographystyle{acm}

\end{document}